\def\multiset#1#2{\ensuremath{\left(\kern-.3em\left(\genfrac{}{}{0pt}{}{#1}{#2}\right)\kern-.3em\right)}}
\newtheorem{thm}{Theorem}[section]
\newtheorem{prop}[thm]{Proposition}
\newtheorem{lem}[thm]{Lemma}
\theoremstyle{definition}
\newtheorem{definition}[thm]{Definition}
\newtheorem*{assumption}{Assumption}
\newtheorem{example}[thm]{Example}
\theoremstyle{remark}
\newtheorem{remark}[thm]{Remark}
\numberwithin{equation}{section}
\newcommand{\Aut}{\mathrm{Aut}}
\newcommand{\End}{\mathrm{End}}
\newcommand{\Hom}{\mathrm{Hom}}
\newcommand{\Hol}{\mathrm{Hol}}
\newcommand{\Inn}{\mathrm{Inn}}
\newcommand{\bN}{\mathbb{N}}
\begin{document}

\large 

\title[Finite skew braces with isomorphic additive and circle groups]{Finite skew braces with isomorphic non-abelian characteristically simple additive and circle groups}
\author{Cindy (Sin Yi) Tsang}
\address{Department of Mathematics\\
Ochanomizu University\\
Tokyo\\
Japan}
\email{tsang.sin.yi@ocha.ac.jp}\urladdr{http://sites.google.com/site/cindysinyitsang/} 

\date{\today}

\maketitle

\begin{abstract}A skew brace is a triplet $(A,\cdot,\circ)$, where $(A,\cdot)$ and $(A,\circ)$ are groups such that the brace relation $x\circ (y\cdot z) = (x\circ y)\cdot x^{-1}\cdot (x\circ z)$ holds for all $x,y,z\in A$. In this paper, we study the number of finite skew braces $(A,\cdot,\circ)$, up to isomorphism, such that $(A,\cdot)$ and $(A,\circ)$ are both isomorphic to $T^n$ with $T$ non-abelian simple and $n\in\bN$. We prove that it is equal to the number of unlabeled directed graphs on $n+1$ vertices, with one distinguished vertex, and whose underlying undirected graph is a tree. In particular, it depends only on $n$ and is independent of $T$.\end{abstract}

\tableofcontents

\section{Introduction}

A \emph{skew brace} is a triplet $(A,\cdot,\circ)$, where $(A,\cdot)$ and $(A,\circ)$ are groups such that the so-called brace relation
\[ x\circ(y\cdot z) = (x\circ y)\cdot x^{-1}\cdot (x\circ z)\]
holds for all $x,y,z\in A$, and here $x^{-1}$ denotes the inverse of $x$ in $(A,\cdot)$. We note that $(A,\cdot)$ and $(A,\circ)$ must have the same identity element. Often $(A,\cdot)$ is referred to as the \emph{additive} group, and $(A,\circ)$ as the \emph{circle} or \emph{multiplicative} group, of the skew brace. Despite the terminology, the group $(A,\cdot)$ need not be abelian. A skew brace $(A,\cdot,\circ)$ with abelian additive group $(A,\cdot)$ is called a \emph{brace} or sometimes a \emph{classical brace}.

\vspace{1mm}

Braces were introduced by W. Rump \cite{Rump} in order to study non-degenerate set-theoretic solutions of the Yang--Baxter equation which are involutive. L. Guarnieri and L. Vendramin \cite{Skew braces} later extended this notion to skew braces to study all such solutions which are not necessarily involutive. Enumerating all skew braces (up to isomorphism) has thus become a problem of interest. As the reader would guess, an isomorphism between two skew braces $(A_1,\cdot_1,\circ_1)$ and $(A_2,\cdot_2,\circ_2)$ is a bijective map $\phi:A_1\longrightarrow A_2$ such that
\begin{align*}\phi(x\cdot _1y) & = \phi(x)\cdot_2 \phi(y)\\\phi(x\circ_1y) & = \phi(x)\circ_2\phi(y)\end{align*}
for all $x,y,\in A_1$. In \cite{Skew braces} an algorithm to compute finite skew braces of small order is also given, which was later improved by \cite{Skew braces 2}. In \cite{Skew braces}, they also showed that there is a connection between skew braces and regular subgroups in the holomorph, as we shall explain.

\vspace{1mm}

We shall only consider skew braces with a finite underlying set. Given any finite groups $G$ and $N$ of the same order, define
\[ b(G,N) = \#\left\{\begin{array}{c}
\mbox{isomorphism classes of skew braces $(A,\cdot,\circ)$}\\\mbox{such that $(A,\cdot)\simeq N$ and $(A,\circ)\simeq G$}\end{array}\right\}.\]
Recall that a subgroup of $\mathrm{Perm}(N)$, the group of permutations of $N$, is said to be \emph{regular} if its action on $N$ is both transitive and free. For example, both $\lambda(N)$ and $\rho(N)$ are regular, where
\[\begin{cases}
\lambda: N\longrightarrow\mathrm{Perm}(N);\hspace{1em}\lambda(\eta) = (x\mapsto \eta x),\\
\rho:N\longrightarrow\mathrm{Perm}(N);\hspace{1em}\rho(\eta) = (x\mapsto x\eta^{-1}),
\end{cases}\]
are the left and right regular representations of $N$. The \emph{holomorph} of $N$ is
\[ \Hol(N) = \rho(N)\rtimes\Aut(N) = \lambda(N) \rtimes \Aut(N).\]
Let us further put
\[ \mathcal{R}(G,N) = \{\mbox{regular subgroups of $\Hol(N)$ which are isomorphic to $G$}\}.\]
Note that $\Aut(N)$ acts on this set via conjugation in $\Hol(N)$.

\begin{prop}\label{b prop} For any finite groups $G$ and $N$ of the same order, we have
\[ b(G,N) = \#\left(\mathcal{R}(G,N)/\Aut(N)\right),\]
the number of orbits of $\mathcal{R}(G,N)$ under conjugation by $\Aut(N)$.
\end{prop}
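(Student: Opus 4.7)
The plan is to fix $N$ with its ambient group structure and re-express both sides of the claimed equality in terms of auxiliary structures on the set $N$ itself. Let $\mathcal{B}_N(G)$ denote the set of binary operations $\circ$ on $N$ such that $(N,\cdot,\circ)$ is a skew brace with $(N,\circ)\simeq G$. The group $\Aut(N)$ acts on $\mathcal{B}_N(G)$ by transport of structure, namely $\alpha\cdot\circ$ is the operation satisfying $\alpha(x)\,(\alpha\cdot\circ)\,\alpha(y) = \alpha(x\circ y)$. I would first dispatch the bookkeeping step that every isomorphism class contributing to $b(G,N)$ has representatives in $\mathcal{B}_N(G)$ (pull a skew brace back along any isomorphism of its additive group with $N$), and that two operations in $\mathcal{B}_N(G)$ determine isomorphic skew braces if and only if they lie in a common $\Aut(N)$-orbit. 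This reduces the proposition to producing an $\Aut(N)$-equivariant bijection $\Psi:\mathcal{B}_N(G)\longrightarrow\mathcal{R}(G,N)$.

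For $\circ\in\mathcal{B}_N(G)$, I would take $\Psi(\circ)$ to be the image of the left regular representation $x\mapsto(y\mapsto x\circ y)$ of $(N,\circ)$, which is automatically regular in $\mathrm{Perm}(N)$ and isomorphic to $(N,\circ)\simeq G$. The sole real content is that this image lies in $\Hol(N)$, and this is precisely what the brace relation encodes: setting $L_x(y) := x^{-1}\cdot(x\circ y)$, the brace relation rearranges to $L_x(y\cdot z) = L_x(y)\cdot L_x(z)$, so $L_x\in\Aut(N,\cdot)$, and the identity $x\circ y = x\cdot L_x(y)$ exhibits left $\circ$-translation by $x$ as $\lambda(x)\,L_x \in \lambda(N)\rtimes\Aut(N) = \Hol(N)$.

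For the inverse, given $R\in\mathcal{R}(G,N)$, regularity provides a unique $\sigma_x\in R$ with $\sigma_x(e)=x$ for each $x\in N$, and $x\circ y := \sigma_x(y)$ defines a group operation on $N$ isomorphic to $R\simeq G$. Writing $\sigma_x = \lambda(a)\alpha$ uniquely with $a\in N$ and $\alpha\in\Aut(N)$, evaluation at $e$ forces $a=x$, so $x\circ y = x\cdot\alpha_x(y)$ for some $\alpha_x\in\Aut(N,\cdot)$; a two-line expansion of both sides of the brace relation using this formula yields $x\cdot\alpha_x(y)\cdot\alpha_x(z)$ on each side. The two assignments are visibly mutually inverse from these formulas.

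Equivariance of $\Psi$ is then forced by the identity $\alpha\lambda(x)\alpha^{-1} = \lambda(\alpha(x))$ together with the fact that $\Aut(N)\subset\Hol(N)$ is stable under conjugation by itself: conjugating the regular image of $(N,\circ)$ by $\alpha\in\Aut(N)$ produces exactly the regular image of the transported operation $\alpha\cdot\circ$. The only genuinely load-bearing step in the whole argument is the equivalence ``brace relation $\iff$ image of left regular representation lies in $\Hol(N)$''; everything else is formal, and indeed this equivalence is precisely the reason skew braces and regular subgroups of the holomorph are two manifestations of the same object.
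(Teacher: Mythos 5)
Your argument is correct, and it is exactly the standard skew-brace/regular-subgroup dictionary: the brace relation is equivalent to $L_x(y)=x^{-1}\cdot(x\circ y)$ defining an element of $\Aut(N,\cdot)$, so left $\circ$-translations land in $\Hol(N)$, and the transport-of-structure action of $\Aut(N)$ matches conjugation of regular subgroups; this is precisely the content of the results of Guarnieri--Vendramin and of the Byott--Vendramin appendix that the paper cites in lieu of a proof. Since the paper offers only that citation, there is no in-text argument to contrast with; your write-up simply supplies the delegated proof, and it does so correctly.
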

\begin{proof}See \cite[Theorem 4.2 and Proposition 4.3]{Skew braces} or \cite[Appendix]{Skew braces'}.
\end{proof}

Now, regular subgroups in the holomorph are also known to be related to Hopf--Galois structures. More specifically, define
\[ e(G,N) = \#\left\{\begin{array}{c}\mbox{Hopf--Galois structures of type $N$ on a}\\\mbox{Galois extension $L/K$ with $\mathrm{Gal}(L/K)\simeq G$}\end{array}\right\}.\]
We refer the reader to \cite[Chapter 2]{Childs book} for the definitions and background.

\begin{prop}\label{e prop} For any finite groups $G$ and $N$ of the same order, we have
\[ e(G,N) = \frac{|\Aut(G)|}{|\Aut(N)|}\cdot \#\mathcal{R}(G,N).\]
\end{prop}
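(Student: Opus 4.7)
The plan is to invoke the Greither--Pareigis classification and then exhibit an explicit bijection between ``labeled'' variants of the two sets, in the spirit of Byott's translation between Hopf--Galois structures and regular subgroups of the holomorph. By Greither--Pareigis, $e(G,N) = \#\mathcal{N}(G,N)$, where
\[ \mathcal{N}(G,N) := \{M\leq\mathrm{Perm}(G) : M \text{ is regular},\ M\simeq N,\ \lambda(G) \text{ normalizes } M\}. \]
Introduce the labeled sets
\[ \tilde{\mathcal{N}}(G,N) := \{(M,\psi) : M\in\mathcal{N}(G,N),\ \psi:N\xrightarrow{\sim}M\} \]
and
\[ \tilde{\mathcal{R}}(G,N) := \{(H,\phi) : H\in\mathcal{R}(G,N),\ \phi:G\xrightarrow{\sim}H\}, \]
of cardinalities $e(G,N)\cdot|\Aut(N)|$ and $\#\mathcal{R}(G,N)\cdot|\Aut(G)|$ respectively, since a regular subgroup isomorphic to $N$ admits exactly $|\Aut(N)|$ isomorphisms from $N$. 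Thus constructing a bijection $\tilde{\mathcal{R}}(G,N)\leftrightarrow\tilde{\mathcal{N}}(G,N)$ establishes the formula.

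Given $(H,\phi)\in\tilde{\mathcal{R}}(G,N)$, regularity of $H$ on $N$ makes $a_\phi:G\to N$, $g\mapsto\phi(g)(1_N)$, a bijection. Transport via $a_\phi$ to define
\[ M := a_\phi^{-1}\lambda(N)a_\phi \subseteq \mathrm{Perm}(G) \qquad\text{and}\qquad \psi(\eta) := a_\phi^{-1}\lambda(\eta)a_\phi. \]
The candidate inverse, given $(M,\psi)\in\tilde{\mathcal{N}}(G,N)$, uses the bijection $b_\psi:N\to G$, $\eta\mapsto\psi(\eta)(1_G)$, and sets $H := b_\psi^{-1}\lambda(G)b_\psi \subseteq \mathrm{Perm}(N)$ and $\phi(g) := b_\psi^{-1}\lambda(g)b_\psi$.

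The main obstacle is verifying that these assignments land in the correct sets. The key identity is
\[ a_\phi\lambda(g)a_\phi^{-1} = \phi(g) \quad\text{in } \mathrm{Perm}(N), \]
which follows from $a_\phi(gg')=\phi(gg')(1_N)=\phi(g)(\phi(g')(1_N))=\phi(g)(a_\phi(g'))$. Since $\phi(g)\in H\subseteq\mathrm{Hol}(N)$ and $\mathrm{Hol}(N)$ normalizes $\lambda(N)$ by definition, conjugating back shows that $\lambda(g)$ normalizes $M$, so $M\in\mathcal{N}(G,N)$ as required. The mirror calculation $b_\psi^{-1}\psi(\eta)b_\psi=\lambda(\eta)$ gives $b_\psi^{-1}Mb_\psi=\lambda(N)$, whence $H=b_\psi^{-1}\lambda(G)b_\psi$ normalizes $\lambda(N)$ and therefore sits inside $\mathrm{Hol}(N)$, so $H\in\mathcal{R}(G,N)$. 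The remaining check that the two assignments are mutually inverse then reduces to the observations $a_\phi(1_G)=1_N$ and $b_\psi(1_N)=1_G$, which hold because $\phi$ and $\psi$ are group homomorphisms.
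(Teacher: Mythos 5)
Your proof is correct and is essentially the same argument the paper relies on: the paper's proof is just a citation of \cite{GP}, \cite{By96} and \cite[Chapter 2]{Childs book}, and what you have written out is precisely that standard Greither--Pareigis plus Byott translation, implemented as a bijection between regular embeddings (equivalently, pairs of a regular subgroup with a chosen isomorphism). The only step worth making explicit is that deducing $H\subseteq\Hol(N)$ from ``$H$ normalizes $\lambda(N)$'' uses the standard fact that $\Hol(N)$ is the full normalizer of $\lambda(N)$ in $\mathrm{Perm}(N)$, which the paper's definition of $\Hol(N)$ does not state but which is classical and easy to verify.
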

\begin{proof}This follows from work of \cite{GP} and \cite{By96}, or see \cite[Chapter 2]{Childs book}.
\end{proof}

From Propositions \ref{b prop} and \ref{e prop}, we see that 
\[ b(G,N) =0\mbox{ if and only if }e(G,N) = 0.\]
However, the numbers $b(G,N)$ and $e(G,N)$ need not be equal, and the calculation of the former is more difficult in general, because one also needs to take the conjugation action of $\Aut(N)$ into account. 


\begin{assumption}In the rest of this paper, we shall take $G$ to be a finite non-abelian characteristically simple group. Equivalently, this means $G = T^n$ for some finite non-abelian simple group $T$ and $n\in\bN$.
\end{assumption}

By drawing tools from graph theory and some consequences of the classification of finite simple groups (CFSG), in \cite{Tsang PAMS} the author obtained a closed formula for the number $e(G,G)$.

\begin{thm}\label{e thm}We have the formula
\[ e(G,G) = 2^n(n|\Aut(T)+1)^{n-1}.\]
\end{thm}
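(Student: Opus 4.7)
The plan is to invoke Proposition~\ref{e prop} with $N = G$, where the automorphism ratio $|\Aut(G)|/|\Aut(G)|$ is trivially $1$, so that
\[ e(G,G) = \#\mathcal{R}(G,G).\]
The problem thus reduces to enumerating the regular subgroups $N' \le \Hol(T^n)$ with $N' \simeq T^n$. I would pursue this in three stages: a CFSG-based structure theorem for the simple factors of such $N'$, an encoding of $N'$ by a decorated labeled tree on $n+1$ vertices, and a Pr\"ufer-style count.

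For the first stage, decompose a regular $N' \simeq T^n$ into its $n$ minimal normal subgroups $N'_1, \ldots, N'_n$, each isomorphic to $T$, and examine the projection of each $N'_i$ inside $\Hol(T^n) = \lambda(T^n) \rtimes (\Aut(T) \wr S_n)$. Combining the simplicity of $T$ with Schreier's conjecture (a CFSG consequence asserting that $\mathrm{Out}(T)$ is solvable), one shows that each $N'_i$ is a ``twisted diagonal'': it is determined by a nonempty support $S_i \subseteq \{1, \ldots, n\}$, a sign $\epsilon_i \in \{\pm 1\}$ encoding a $\lambda$- or $\rho$-type embedding, and a tuple of automorphisms $(\alpha_{i,j})_{j \in S_i}$ considered modulo a diagonal inner-automorphism ambiguity.

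For the second stage, build a graph $\Gamma(N')$ on vertex set $\{0, 1, \ldots, n\}$, with vertex $0$ representing the ambient copy $\lambda(T^n)$ of $T^n$ and each vertex $i > 0$ representing $N'_i$; edges record incidences between the $S_i$ and the coordinates, decorated by the $\alpha_{i,j}$. The regularity of $N'$---that it acts freely and transitively on $T^n$---should translate precisely into the statement that the underlying undirected graph of $\Gamma(N')$ is a spanning tree on $\{0, \ldots, n\}$, and the resulting correspondence is a bijection. For the final count, the $n$ signs contribute $2^n$, while the decorated-tree side is parameterized by a Pr\"ufer-type sequence of length $n-1$ whose entries each have $n|\Aut(T)| + 1$ independent choices (the letter $0$, or one of the $n$ non-root letters paired with an element of $\Aut(T)$), yielding $(n|\Aut(T)|+1)^{n-1}$ and hence the claimed formula.

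The main obstacle is the first stage: establishing the twisted-diagonal classification and ruling out exotic simple subgroups isomorphic to $T$ inside $\Aut(T) \wr S_n$. This is where CFSG genuinely enters, through Schreier's conjecture tightly constraining how copies of $T$ can embed. The second, combinatorial stage is more routine, and the Pr\"ufer interpretation is what cleanly produces the exponent $n-1$.
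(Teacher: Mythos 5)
Your reduction to $\#\mathcal{R}(G,G)$ and the overall blueprint (a CFSG/Schreier structure step, a tree encoding, a weighted count) do match the shape of the actual argument, which is proved in \cite{Tsang PAMS} and recalled in Sections 2--3 here. The genuine gap is your stage-one structure theorem. It is not true that each minimal normal subgroup $N'_i$ of a regular $N'\simeq T^n$ is classified by a support together with a single sign $\epsilon_i$ distinguishing a $\lambda$-type from a $\rho$-type embedding: mixed types occur. For instance, with $n=2$ take $f(x^{(1)},x^{(2)})=(1,x^{(1)})$ and $g=\mathrm{Id}_G$; then $(f,g)$ is fixed point free, so $\mathcal{G}_{(f,g)}\in\mathcal{R}(G,G)$, and its minimal normal subgroup $\{\rho((x,1))\lambda((1,x)):x\in T\}$ sends $(a,b)\mapsto(ax^{-1},xb)$, i.e.\ it is a right-translation twist in the first coordinate and a left-translation twist in the second. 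Such subgroups are invisible to your classification, so both the claimed bijection and the provenance of the factor $2^n$ break down. The correct structural statement, on which the count is actually built, is (\ref{Out fact}) (every member of $\mathcal{R}(G,G)$ lies in $\mathrm{InHol}(G)$, via Schreier) combined with the Byott--Childs description (\ref{R fpf}): $N'=\mathcal{G}_{(f,g)}$ for a fixed point free pair of endomorphisms, each subgroup arising from exactly $|\Aut(G)|$ pairs $(f\circ\alpha,g\circ\alpha)$, $\alpha\in\Aut(G)$. In that parametrization the $2^n$ counts, for each $i\in\bN_n$, which endpoint of the edge $\mathfrak{e}_i$ of $\Gamma_{\{f,g\}}$ is $\theta_f(i)$ and which is $\theta_g(i)$ --- one binary choice per edge (equivalently per coordinate), not one per minimal normal subgroup.

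Two further points. Your assertion that regularity translates exactly into the underlying graph being a spanning tree needs, for the ``only if'' direction, that $T$ admits no fixed-point-free automorphism (another CFSG consequence; see Example~\ref{ex2} and Proposition~\ref{criterion prop}), which your sketch omits. And the published count is not a Pr\"ufer bijection: one counts fixed point free pairs over labeled trees using Clarke's formula (\ref{Clarke}) with $0$ as the specified vertex --- a labeled tree whose vertex $0$ has degree $d$ supports $n!\,2^n|\Aut(T)|^{2n-d}$ pairs, since an edge meeting $0$ carries one isomorphism $\varphi$ while the others carry two --- and then divides by $|\Aut(G)|=n!\,|\Aut(T)|^n$, giving $e(G,G)=2^n\sum_{d=1}^{n}\binom{n-1}{d-1}(n|\Aut(T)|)^{n-d}=2^n(n|\Aut(T)|+1)^{n-1}$ by the binomial theorem. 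Your decorated-Pr\"ufer count is numerically consistent with this, but it only becomes a proof once the fixed-point-free-pair parametrization replaces the faulty twisted-diagonal classification.
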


For $n=1$, by Proposition \ref{e prop} and Theorem \ref{e thm}, we have
\[ e(G,G) = \#\mathcal{R}(G,G) = 2\mbox{ and so }\mathcal{R}(G,G) = \{\lambda(G),\rho(G)\}.\]
This fact was first shown in  \cite{Childs non-abelian}; also see \cite[Theorem 1.4]{Tsang HG} for generalization to quasisimple groups. From this and Proposition \ref{b prop}, we get $b(G,G)=2$.

\vspace{1mm}

For $n\geq 2$, however, the size of $\mathcal{R}(G,G)$ becomes much larger, and it is no longer obvious what the value of $b(G,G)$ ought to be simply from Theorem \ref{e thm}. Nonetheless, we are still able to determine $b(G,G)$. To state our result, we first need to introduce some definitions.

\vspace{1mm}

Recall that a \emph{tree} is a connected graph which has no cycle, or equivalently, a graph in which any two vertices can be connected by a unique simple path. It is known that a graph on $n+1$ vertices is a tree if and only if it has exactly $n$ edges and is connected. 

\begin{definition}\label{tree def}
In this paper, we shall use \emph{directed tree} to mean a directed graph whose underlying undirected graph is a tree. For brevity, put
\[ \bN_n = \{1,\dots,n\}\mbox{ and } \bN_{0,n} = \bN_n\sqcup\{0\}.\]
Let $\mathcal{T}(n)$ be the set of all labeled directed trees on $n+1$ vertices, labeled by $\bN_{0,n}$. Two directed graphs $\Gamma_1,\Gamma_2\in\mathcal{T}(n)$ are \emph{equivalent} if there is a bijection 
\[ \xi\in\mathrm{Map}(\bN_{0,n},\bN_{0,n})\mbox{ with }\xi(0)=0\]
such that for any $v,v'\in\bN_{0,n}$, there is an arrow $v\longrightarrow v'$ in $\Gamma_1$ precisely when there is an arrow $\xi(v)\longrightarrow \xi(v')$ in $\Gamma_2$. We shall write $\Gamma_1\sim \Gamma_2$ in this case. 
\end{definition}

While $e(G,G)$ depends on both $n$ and $T$, the number $b(G,G)$ turns out to depend only on $n$. Our main theorem is the following.

\begin{thm}\label{b thm}We have the equality
\[ b(G,G) = \#\left(\mathcal{T}(n)/\sim\right),\]
the number of equivalence classes of $\sim$ on the set $\mathcal{T}(n)$. \end{thm}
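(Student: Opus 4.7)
By Proposition \ref{b prop}, computing $b(G,G)$ is equivalent to counting the orbits of $\Aut(G)$ acting by conjugation on $\mathcal{R}(G,G)$. The plan is to build an explicit $\Aut(G)$-equivariant parametrization of $\mathcal{R}(G,G)$ in terms of labeled directed trees in $\mathcal{T}(n)$ together with auxiliary $\Aut(T)$-valued decorations, and then to argue that the decoration data is wiped out by the $\Aut(T)^n$-action, so that the orbit count collapses to $\mathcal{T}(n)/\sim$.

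The first step is to fix a decomposition $G = T_1 \times \cdots \times T_n$ and construct a bijection
\[ \mathcal{R}(G,G) \longleftrightarrow \{(\Gamma,\delta) : \Gamma \in \mathcal{T}(n),\ \delta \text{ an admissible decoration of } \Gamma\}, \]
where the vertex $0$ of $\Gamma$ indexes the chosen factorization of $G$, the vertices $1,\ldots,n$ index the $n$ simple factors of $R \in \mathcal{R}(G,G)$, and $\delta$ decorates the edges of $\Gamma$ not incident to $0$ by elements of $\Aut(T)$. Such a weighted-tree parametrization must account for the cardinality
\[ \# \mathcal{R}(G,G) = 2^n \cdot (n |\Aut(T)| + 1)^{n-1}, \]
which comes from Theorem \ref{e thm} and Proposition \ref{e prop} with $N = G$: Cayley's formula provides the $(n+1)^{n-1}$ underlying trees on $\bN_{0,n}$, the factor $2^n$ orients the $n$ edges, and the $\Aut(T)$-decorations on non-root edges contribute the remaining weight via the matrix-tree theorem. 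This correspondence should be essentially implicit in the proof of Theorem \ref{e thm} in \cite{Tsang PAMS}, and the first bulk of the argument is to extract it and verify its $\Aut(G)$-equivariance.

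Since $T$ is non-abelian characteristically simple, a classical result gives $\Aut(G) \simeq \Aut(T)^n \rtimes S_n$. Under the parametrization above, the $S_n$-factor should fix vertex $0$ while permuting the labels $1,\ldots,n$ and acting on decorations compatibly, which is exactly the action defining the equivalence $\sim$ of Definition \ref{tree def}. The $\Aut(T)^n$-factor should act trivially on the underlying directed tree while acting on the decorations only. The key claim is then that $\Aut(T)^n$ acts transitively on the $\Aut(T)$-decorations above each fixed $\Gamma \in \mathcal{T}(n)$. Granting this, every $\Aut(G)$-orbit on $\mathcal{R}(G,G)$ has a decoration-free representative, and two such representatives lie in the same orbit iff their underlying directed trees are $\sim$-equivalent; this gives
\[ \mathcal{R}(G,G)/\Aut(G) \longleftrightarrow \mathcal{T}(n)/\sim \]
and hence the theorem.

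The main obstacle is the first step: pinning down the bijection between $\mathcal{R}(G,G)$ and decorated trees in a way that is both combinatorially transparent and $\Aut(G)$-equivariant. Here one must exploit the CFSG-based fact that $\mathrm{Out}(T)$ is solvable (the Schreier conjecture), which forces the simple factors of $R \cong T^n$ to sit inside $\Hol(G)$ in a restricted way: each factor $T_i^R$ either projects nontrivially into a single coordinate of $G$ or embeds along a specific diagonal twisted by an element of $\Aut(T)$, and these matching data are precisely what weave into the tree structure. Once the parametrization is in hand, both the $\Aut(G)$-equivariance and the transitivity of $\Aut(T)^n$ on decorations should follow from a direct unraveling of the action, with the final orbit count reducing to bookkeeping.
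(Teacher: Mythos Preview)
Your strategy is close to the paper's, but the two obstacles you acknowledge are in fact the entire proof, and neither is addressed. The paper parametrizes $\mathcal{R}(G,G)$ not by subgroups directly but by fixed-point-free pairs $(f,g)\in\End(G)^2$, associating to each the directed tree $\Gamma_{(f,g),\rightarrow}\in\mathcal{T}(n)$ whose $i$th arrow runs from $\theta_f(i)$ to $\theta_g(i)$; it then proves (Theorem~\ref{key thm}) that $\mathcal{G}_{(f,g)}$ and $\mathcal{G}_{(f',g')}$ are $\Aut(G)$-conjugate iff $\Gamma_{(f,g),\rightarrow}\sim\Gamma_{(f',g'),\rightarrow}$. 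Your proposed bijection $\mathcal{R}(G,G)\leftrightarrow\{(\Gamma,\delta)\}$ cannot be made canonical: the pair $(f,g)$ is determined by the subgroup only up to right composition with $\Aut(G)$ (Lemma~\ref{observation lem} with $\psi=\mathrm{Id}_G$), and this already permutes the vertex labels of the tree, so only the $\sim$-class of $\Gamma$ is intrinsic to the subgroup. Relatedly, in the paper's tree the simple factors of $R$ index the \emph{edges}, not the vertices; the vertices correspond to the factors of $G$ together with a trivial factor at $0$. The paper sidesteps all of this by never positing such a bijection and working at the level of pairs.

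The substantive content is your ``key claim'' that $\Aut(T)^n$ acts transitively on decorations, which you only assert. In the paper this is the backward implication of Theorem~\ref{key thm}: given $\sim$-equivalent trees, one must produce $\psi,\pi\in\Aut(G)$ with $(f',g')=(\psi f\pi,\psi g\pi)$. After the permutation data $\theta_\psi,\theta_\pi$ are fixed from the equivalence (Proposition~\ref{theta psi def}), the problem becomes solving a coupled system of equations for the $2n$ unknowns $\varphi_{\psi,i},\varphi_{\pi,i}$. The paper encodes this system as a bipartite graph $\Gamma_{\Psi,\Pi}$ on parts $\{1_\psi,\dots,n_\psi\}$ and $\{1_\pi,\dots,n_\pi\}$, proves it is acyclic (Proposition~\ref{no cycle prop}, using that $\Gamma_{\{f,g\}}$ is a tree), and then constructs a solution by propagating along each connected component from a freely chosen seed. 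Neither the cardinality bookkeeping nor Schreier's conjecture delivers this step; it is an independent combinatorial argument that your outline does not supply.
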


Observe that unlike Theorem \ref{e thm}, here we do not have a closed formula for $b(G,G)$. Let us briefly explain why this is expected to be difficult. 

\begin{remark}\label{rem}The equivalence relation $\sim$ in Definition \ref{tree def} simply means that we can forget about the labeling of the vertices except the vertex $0$. We then see that $\#(\mathcal{T}(n)/\sim)$ is equal to the size of the set
\[ \mathcal{T}^\star(n) = \left\{\begin{array}{c}
\mbox{unlabeled directed trees on $n+1$ vertices}\\\mbox{with a distinguished vertex (root)}\end{array}\right\}.\]
The underlying undirected graph of an element $\Gamma\in\mathcal{T}^\star(n)$ may be regarded as an (unordered) rooted tree.\footnote{We did not refer to elements of $\mathcal{T}^\star(n)$ as ``directed rooted trees" because it has a different meaning.} Now, the famous Cayley's formula tells us that the number of labeled trees on $n+1$ vertices is $(n+1)^{n-1}$. It was also shown in \cite{Clarke} that for each $1\leq d\leq n$, we have
\begin{equation}\label{Clarke}
\#\left\{\begin{array}{c}\mbox{labeled trees on $n+1$ vertices in}\\\mbox{which a specified vertex has degree $d$} \end{array}\right\} = {n-1 \choose d-1}n^{n-d}.\end{equation}
The formula for $e(G,G)$ in Theorem \ref{e thm} was obtained using (\ref{Clarke}). However, to compute $b(G,G)$, we need to consider unlabeled trees, and so far there is no known closed formula for the number of unlabeled (rooted) trees; see \cite[A000055 and A000081]{OEIS}. We also need to take the orientation of the directed trees into account, which adds even more complexity into the problem. It is possible that there does not even exist a closed formula for $b(G,G)$.\end{remark}

\section{Description of endomorphisms}

The endomorphisms of $G$ play an important role in the proof of both Theorems \ref{e thm} and \ref{b thm}. Following the notation in \cite{Tsang PAMS}, let us give a description of these endomorphisms.

\vspace{1mm}

For each $i\in\bN_n$, we shall write
\[ T^{(i)} = 1\times \cdots \times 1\times T\times 1\times\cdots\times 1\mbox{ ($T$ is in the $i$th position)},\]
and use  $x^{(i)}$ to denote an arbitrary element of $T^{(i)}$. For convenience, let $T^{(0)}$ denote the trivial subgroup and $x^{(0)}$ the identity element.

\begin{prop}\label{End prop}Elements of $\End(G)$ are precisely the maps
\[(x^{(1)},\dots, x^{(n)})\mapsto (\varphi_1(x^{(\theta(1))}),\dots, \varphi_n(x^{(\theta(n))})),\]
where $\theta\in\mathrm{Map}(\bN_n,\bN_{0,n})$, and $\varphi_i\in\Hom(T^{(\theta(i))},T^{(i)})$ for each $i\in\bN_n$. Moreover, the above map lies in $\Aut(G)$ precisely when $\theta$ is injective with image equal to $\bN_n$ and $\varphi_i$ is bijective for each $i\in\bN_n$.
\end{prop}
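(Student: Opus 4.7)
The plan is to decompose any endomorphism $f\in\End(G)$ componentwise via the coordinate projections $\pi_i\colon G\to T^{(i)}$. Writing $f_i=\pi_i\circ f$, the endomorphism $f$ is completely determined by the tuple of homomorphisms $(f_1,\dots,f_n)$, and conversely every such tuple yields an endomorphism. The task therefore reduces to classifying $\Hom(T^n, T^{(i)})$ for each $i$, or equivalently (after identifying $T^{(i)}\simeq T$) classifying $\Hom(T^n, T)$.

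The key step is to show that every $g\in\Hom(T^n, T)$ factors through projection to at most one coordinate: there exist $j\in\bN_{0,n}$ and $\varphi\in\Hom(T^{(j)}, T)$ with $g(x^{(1)},\dots,x^{(n)})=\varphi(x^{(j)})$, where the value $j=0$ encodes the trivial map. To establish this, observe that $T^{(i)}$ and $T^{(j)}$ commute elementwise in $T^n$ whenever $i\neq j$, so their images $H_i := g(T^{(i)})$ and $H_j := g(T^{(j)})$ commute in $T$. Since $T$ is simple and non-abelian, each $H_i$ is either trivial or isomorphic to $T$. If two of them were nontrivial, any element of $H_i\cap H_j$ would centralise both $H_i$ and $H_j$ and hence lie in $Z(H_i)=1$, so the product $H_iH_j\simeq H_i\times H_j$ would embed in $T$ --- impossible by order comparison. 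Thus at most one restriction $g|_{T^{(i)}}$ is nontrivial, which gives the claimed form.

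Applying this to each $f_i$ produces $\theta(i)\in\bN_{0,n}$ and $\varphi_i\in\Hom(T^{(\theta(i))}, T^{(i)})$ with $f_i(x^{(1)},\dots,x^{(n)})=\varphi_i(x^{(\theta(i))})$, yielding the asserted parametrisation; the reverse inclusion (any such tuple defines an endomorphism) is immediate from the fact that distinct factors commute. For the automorphism characterisation, I would argue by elementary reversibility. If $\theta(i)=0$ for some $i$, then the $i$-th coordinate of $f$ is identically trivial, ruling out surjectivity; if $\theta(i)=\theta(j)$ for some $i\neq j$ in $\bN_n$, the $i$-th and $j$-th outputs of $f$ depend only on the single input $x^{(\theta(i))}$, again precluding surjectivity; and if some $\varphi_i$ is not bijective, $f$ inherits non-bijectivity in that coordinate. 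Conversely, if $\theta$ restricts to a bijection $\bN_n\to\bN_n$ and each $\varphi_i$ is bijective, an inverse of $f$ of the same form is constructed explicitly by inverting both $\theta$ and each $\varphi_i$.

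The only substantive step is the ``at most one factor'' claim in the second paragraph, where the non-abelian simplicity of $T$ is essential --- it supplies both the triviality of $Z(T)$ needed to split the intersection $H_i\cap H_j$ and the order bound that rules out $T\times T\hookrightarrow T$. The assembly and the automorphism case are routine bookkeeping through the projections and their inverses.
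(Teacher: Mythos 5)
Your proof is correct, and it is in fact more than the paper provides: the paper's ``proof'' of this proposition is only a citation to the proof of Byott's lemma describing $\Aut(G)$ for $G=T^n$, so your argument serves as a self-contained substitute for that reference. Your key lemma --- that any homomorphism $T^n\to T$ is trivial on all but at most one coordinate factor, because two nontrivial restrictions would give commuting copies of $T$ inside $T$, hence (using $Z(T)=1$ to get trivial intersection) a subgroup isomorphic to $T\times T$ of order $|T|^2$ --- is exactly the structural fact underlying the cited description, and it is also where non-abelian simplicity is genuinely used, just as in the source. The assembly via the projections $\pi_i$ and the surjectivity counting for the $\Aut(G)$ characterisation are sound. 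Two minor points you could make explicit: first, the parametrisation is not unique, since $\theta(i)\neq 0$ with trivial $\varphi_i$ defines the same map as $\theta(i)=0$ --- the paper later imposes condition (\ref{phi cond}) in Definition \ref{End def} precisely to remove this ambiguity, and your construction (taking $\theta(i)=0$ exactly when $\pi_i\circ f$ is trivial) already produces the normalised form; second, in your automorphism argument the case ``$\varphi_i$ nontrivial but not bijective'' cannot occur, since a nontrivial homomorphism between copies of the finite simple group $T$ is automatically bijective, so the dichotomy driving your surjectivity argument is really trivial versus bijective.
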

\begin{proof}This basically follows from the proof of \cite[Lemma 3.2]{Byott simple}, which gives a description of elements of $\Aut(G)$.
\end{proof}

In view of Proposition \ref{End prop}, we introduce the following notation.

\begin{definition}\label{End def}Given any $f\in\End(G)$, we shall write
\[ f(x^{(1)},\dots,x^{(n)}) = (\varphi_{f,1}(x^{(\theta_f(1))}),\dots,\varphi_{f,n}(x^{(\theta_f(n))})),\]
where $\theta_f\in\mathrm{Map}(\bN_n,\bN_{0,n})$, and $\varphi_{f,i}\in\Hom(T^{(\theta_f(i))},T^{(i)})$ for each $i\in\bN_n$. To ensure that $\theta_f$ is uniquely determined by $f$, we shall assume that
\begin{equation}\label{phi cond} \varphi_{f,i}\mbox{ is non-trivial when }\theta_f(i)\neq0.\end{equation}
Note that in this case $\varphi_{f,i}$ is necessarily bijective because $T$ is simple. Let us define $\theta_f(0) = 0$, so that we may regard $\theta_f\in\mathrm{Map}(\bN_{0,n},\bN_{0,n})$. Also, we shall write $\varphi_{f,0}\in \Hom(T^{(0)},T^{(0)})$ for the trivial map for convenience.
\end{definition}

\section{Characterization of regular subgroups}

In \cite{Tsang PAMS}, the present author computed $e(G,G)$ by first giving a characterization of $\mathcal{R}(G,G)$ in terms of trees. Let us recall this characterization.

\vspace{1mm}

Using a combinatorial argument, it was shown in \cite[Section 3]{Tsang PAMS} that
\begin{equation}\label{Out fact}\mbox{every element of $\mathcal{R}(G,G)$ lies in $\mathrm{InHol}(G) = \rho(G)\rtimes\mathrm{Inn}(G)$},\end{equation}
where $\Inn(G)$ denotes the inner automorphism group of $G$. We remark that the proof of (\ref{Out fact}) requires the fact that $T$ has solvable outer automorphism group, which is known as Schreier conjecture and is a consequence of CFSG. 

\vspace{1mm}

Recall that a pair $(f,g)$, with $f,g\in\End(G)$, is \emph{fixed point free} (fpf) if 
\[f(x) = g(x)\mbox{ holds exactly when }x = 1.\]
Given any $f,g\in\End(G)$, we can define a subgroup of $\mathrm{InHol}(G)$ by setting
\begin{equation}\label{G fg}\mathcal{G}_{(f,g)} = \{\rho(g(x))\lambda(f(x)):x\in G\}.\end{equation}
It is easy to check that $\mathcal{G}_{(f,g)}$ is regular, in which case $\mathcal{G}_{(f,g)}\simeq G$, if and only if $(f,g)$ is fpf; see \cite[Proposition 1]{Byott Childs}. Since $G$ has trivial center, these are all the regular subgroups isomorphic to $G$ in $\mathrm{InHol}(G)$ by\cite[Proposition 6]{Byott Childs}, so
\begin{equation}\label{R fpf}\mathcal{R}(G,G) = \{\mathcal{G}_{(f,g)}: f,g\in\End(G)\mbox{ such that $(f,g)$ is fpf}\}.\end{equation}
We are thus reduced to the problem of determining when a pair $(f,g)$, with $f,g\in \End(G)$, is fpf.  Plainly $(f,g)$ is fpf if and only if the equations
\begin{equation}\label{ei} \varphi_{f,i}(x^{(\theta_f(i))}) = \varphi_{g,i}(x^{(\theta_g(i))}),\end{equation}
for $i$ ranging over $\bN_n$, have no common solution except 
\[x^{(1)} = \cdots = x^{(n)} = 1.\]
Here the notation is as in Definition \ref{End def}. To study this system of equations, in \cite{Tsang PAMS} the author developed a graph-theoretic method, as follows. 
\begin{definition}\label{graph fg def}Given any $f,g\in\End(G)$, define $\Gamma_{\{f,g\}}$ to be the undirected multigraph with vertex set $\bN_{0,n}$, and we draw one edge $\mathfrak{e}_i$ between $\theta_f(i)$ and $\theta_g(i)$ for each $i\in\bN_{n}$. Note that $\Gamma_{\{f,g\}}$ has $n+1$ vertices and $n$ edges, so it is a tree precisely when it is connected.
 \end{definition}

The vertices of $\Gamma_{\{f,g\}}$ should be viewed as the identity element $x^{(0)}$ and the variables $x^{(1)},\dots,x^{(n)}$. For each $i\in\bN_n$, the edge $\mathfrak{e}_i$ represents (\ref{ei}), and
\begin{equation}\label{observation1}
\begin{cases}
\mbox{(\ref{ei}) is equivalent to $x^{(\theta_f(i))} = (\varphi_{f,i}^{-1}\circ\varphi_{g,i})(x^{(\theta_g(i))})$}&\mbox{if }\theta_f(i)\neq0,\\
\mbox{(\ref{ei}) is equivalent to $x^{(\theta_g(i))} = (\varphi_{g,i}^{-1}\circ\varphi_{f,i})(x^{(\theta_f(i))})$}&\mbox{if }\theta_g(i)\neq0,
\end{cases}
\end{equation}
by condition (\ref{phi cond}). Thus, we can solve the equations (\ref{ei}) by ``following the edges'' in the graph $\Gamma_{\{f,g\}}$. Let us illustrate the idea via two examples.

\begin{example}\label{ex1}Take $n=3$ and let $f,g\in\End(G)$ be such that
\begin{align*}
f(x^{(1)},x^{(2)},x^{(3)}) & = (\varphi_{f,1}(x^{(2)}),\varphi_{f,2}(x^{(0)}),\varphi_{f,3}(x^{(1)})),\\
g(x^{(1)},x^{(2)},x^{(3)}) & = (\varphi_{g,1}(x^{(1)}),\varphi_{g,2}(x^{(3)}),\varphi_{g,3}(x^{(0)})).
\end{align*}
The associated graph $\Gamma_{\{f,g\}}$ is given by
\[\begin{tikzpicture}
\node at (0,0) [name = 0] {$0$};
\node at (-1.5,0) [name = 1] {$1$};
\node at (-3,0) [name = 2]{$2$};
\node at (1.5,0) [name = 3]{$3$};
\draw[-] (0) -- (1) -- (2) (0) -- (3);
\end{tikzpicture}\]
and is a tree. If $f(x^{(1)},x^{(2)},x^{(3)}) = g(x^{(1)},x^{(2)},x^{(3)})$ holds, then based on the observations in (\ref{observation1}), we see that: 
\begin{itemize}
\item The edge $0-1$ tells us that $x^{(1)} = (\varphi_{f,3}^{-1}\circ\varphi_{g,3})(x^{(0)})=1$. 
\item The edge $0-3$ tells us that $x^{(3)} = (\varphi_{g,2}^{-1}\circ\varphi_{f,2})(x^{(0)}) = 1$.
\item The edge $1-2$ tells us that $x^{(2)}$ is determined by $x^{(1)}$ with
\[x^{(2)} = (\varphi_{f,1}^{-1}\circ\varphi_{g,1})(x^{(1)}).\]
Since we already know that $x^{(1)}=1$, this implies that $x^{(2)}=1$ also. 
\end{itemize}
We conclude that the pair $(f,g)$ is fpf.
\end{example}

\begin{example}\label{ex2}Take $n=3$ and let $f,g\in\End(G)$ be such that
\begin{align*}
f(x^{(1)},x^{(2)},x^{(3)}) & = (\varphi_{f,1}(x^{(0)}),\varphi_{f,2}(x^{(2)}),\varphi_{f,3}(x^{(2)})),\\
g(x^{(1)},x^{(2)},x^{(3)}) & = (\varphi_{g,1}(x^{(1)}),\varphi_{g,2}(x^{(3)}),\varphi_{g,3}(x^{(3)})).
\end{align*}
The associated graph $\Gamma_{\{f,g\}}$ is given by
\[\begin{tikzpicture}
\node at (0,0) [name = 0] {$0$};
\node at (1.5,0) [name = 1] {$1$};
\node at (3,0) [name = 2]{$2$};
\node at (4.5,0) [name = 3]{$3$};
\draw[-] (0) -- (1);
\draw[-] (2) edge [bend right = 30] (3);
\draw[-] (2) edge [bend left = 30] (3);
\end{tikzpicture}\]
and is not a tree. The cycle $2-3-2$ may be thought of as representing
\[ (\varphi_{f,3}^{-1}\circ\varphi_{g,3}\circ\varphi_{g,2}^{-1}\circ\varphi_{f,2})(x^{(2)}) = x^{(2)}.\]
It is a consequence of CFSG that $T$ has no fpf automorphism, whence there exists $\sigma^{(2)}\in T^{(2)}$ with $\sigma^{(2)}\neq1$ such that
\[ (\varphi_{f,3}^{-1}\circ\varphi_{g,3}\circ\varphi_{g,2}^{-1}\circ\varphi_{f,2})(\sigma^{(2)}) = \sigma^{(2)}.\]
We then see that $(f,g)$ has a fixed point other than the identity, namely
\[ (x^{(1)},x^{(2)},x^{(3)}) = (1,\sigma^{(2)},(\varphi_{g,2}^{-1}\circ\varphi_{f,2})(\sigma^{(2)})).\]
We conclude that the pair $(f,g)$ is not fpf.
\end{example}

The above ideas may be formalized, and we have the following criterion.

\begin{prop}\label{criterion prop}A pair $(f,g)$, with $f,g\in\End(G)$, is fpf if and only if the graph $\Gamma_{\{f,g\}}$ is a tree.
\end{prop}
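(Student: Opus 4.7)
The plan is to formalize Examples~\ref{ex1} and~\ref{ex2} into proofs of the two implications. For the sufficiency direction, I would suppose $\Gamma_{\{f,g\}}$ is a tree and take $x = (x^{(1)},\dots,x^{(n)})$ satisfying $f(x) = g(x)$; after setting $x^{(0)} = 1$, I would show $x^{(v)} = 1$ for every $v \in \bN_{0,n}$ by induction on the distance from $0$ to $v$ in the tree. The base case $v = 0$ is trivial. For $v$ at distance $d \geq 1$, let $u$ be its neighbor on the unique path from $0$ to $v$; the edge between them is $\mathfrak{e}_i$ for some $i$, and (\ref{observation1}) either forces $x^{(v)} = 1$ directly (if $u = 0$, where one of $\theta_f(i)$, $\theta_g(i)$ vanishes) or writes $x^{(v)}$ as a bijective image of $x^{(u)} = 1$ (if $u \neq 0$). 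This yields $x^{(v)} = 1$ for all $v$, so $(f,g)$ is fpf.

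For the necessity direction, I would prove the contrapositive. Since $\Gamma_{\{f,g\}}$ has $n+1$ vertices and $n$ edges, failing to be a tree is equivalent to being disconnected. Writing the connected components as $C_0, C_1, \dots, C_r$ with $C_0 \ni 0$ and $r \geq 1$, and letting $\rho_i = e_i - k_i + 1 \geq 0$ denote the circuit rank of $C_i$ (with $k_i, e_i$ its numbers of vertices and edges), I would first establish the identity
\[ \sum_{i=0}^r \rho_i = n - (n+1) + (r+1) = r, \]
which precludes $\rho_j \geq 2$ holding for every $j \geq 1$ (else the sum would exceed $r$). Hence there is some $j \geq 1$ with $\rho_j \leq 1$; I fix such a component and call it $C$.

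If $C$ is a tree ($\rho_j = 0$), I would pick any $v \in C$, set $x^{(v)}$ to a non-identity element of $T^{(v)}$, and propagate through $C$ along the edge-bijections in (\ref{observation1}); the assignment is consistent because $C$ is acyclic, and every $x^{(w)}$ for $w \in V(C)$ is non-trivial because the bijections preserve non-triviality. If $\rho_j = 1$, then $C$ contains a unique cycle $Z$; composing the edge-bijections around $Z$ yields an automorphism $\alpha$ of $T^{(v)}$ for a chosen cycle vertex $v$, and by CFSG (as invoked in Example~\ref{ex2}), $\alpha$ admits a non-identity fixed point $t$. Setting $x^{(v)} = t$ and propagating around $Z$---consistently, since $\alpha(t) = t$---then outward through the pendant trees attached to $Z$, would produce a non-trivial assignment on all of $V(C)$. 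In either case, assigning $x^{(u)} = 1$ for all $u \notin C$ satisfies the remaining equations vacuously (both sides become $1$), giving a non-trivial common solution to (\ref{ei}). The main obstacle is this backward direction: the circuit-rank identity above is exactly what lets one avoid components of rank $\geq 2$, where a common non-trivial fixed point for several automorphisms of $T$ would be required and in general need not exist.
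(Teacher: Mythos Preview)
Your argument is correct. The paper itself does not give a proof but simply cites \cite[Propositions 2.5 and 2.9]{Tsang PAMS}, so a line-by-line comparison is not possible; however, your two directions faithfully formalize the ideas sketched in Examples~\ref{ex1} and~\ref{ex2}, which are drawn from that reference, so the approach is essentially the intended one.

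The one place where you add something beyond what the examples suggest is the backward direction: the examples only illustrate a single cycle, but in general a non-tree component could have circuit rank $\geq 2$, in which case one would need a \emph{common} non-trivial fixed point for several automorphisms of $T$, and CFSG does not guarantee this. Your circuit-rank identity $\sum_{i=0}^r \rho_i = r$ neatly sidesteps this by producing a component $C_j$ with $j\geq 1$ and $\rho_j\leq 1$, where either the tree case (no CFSG needed) or the unicyclic case (one application of CFSG) suffices. This is the right way to close the gap, and it also covers the degenerate situations (a singleton component, a loop, or a pair of parallel edges) uniformly. Your verification that setting $x^{(u)}=1$ outside $C$ makes the remaining equations hold is likewise correct, since for any edge $\mathfrak{e}_i$ outside $C$ both $x^{(\theta_f(i))}$ and $x^{(\theta_g(i))}$ equal $1$ and hence both sides of (\ref{ei}) are trivial.
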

\begin{proof}See \cite[Propositions 2.5 and 2.9]{Tsang PAMS}. \end{proof}

Combining (\ref{R fpf}) and Proposition \ref{criterion prop}, we obtain
\begin{equation}\label{R graph}
\mathcal{R}(G,G) = \{\mathcal{G}_{(f,g)}: f,g\in\End(G)\mbox{ such that $\Gamma_{\{f,g\}}$ is tree}\}.\end{equation}
We remark that the proof of Theorem \ref{e thm} given in \cite{Tsang PAMS} uses this characterization of $\mathcal{R}(G,G)$ and the formula (\ref{Clarke}) with $0$ as the specified vertex. 

\section{Counting orbits of regular subgroups}

In this section, we shall prove Theorem \ref{b thm}. To do so, we need to consider the conjugation action of $\Aut(G)$ on elements of $\mathcal{R}(G,G)$, and we shall again use graph theory. Let us first refine Definition \ref{graph fg def} as follows.

\begin{definition}\label{Gamma def}Given any $f,g\in\End(G)$, define $\Gamma_{(f,g),\rightarrow}$ to be the directed multigraph obtained from $\Gamma_{\{f,g\}}$ by replacing the edge $\mathfrak{e}_i$ with an arrow from $\theta_f(i)$ to $\theta_g(i)$ for each $i\in\bN_{n}$. The arrow $\rightarrow$ in the subscript indicates that the orientation is chosen to go from $\theta_f(\cdot)$ to $\theta_g(\cdot)$. 
\end{definition}


Recall the definition in (\ref{G fg}) and also the equivalence relation $\sim$ on the set $\mathcal{T}(n)$ in Definition \ref{tree def}. The key is the next theorem.

\begin{thm}\label{key thm}Let $(f,g)$ and $(f',g')$, with $f,g,f',g'\in\End(G)$, be such that the graphs $\Gamma_{\{f,g\}}$ and $\Gamma_{\{f',g'\}}$ are trees. Then, the subgroups $\mathcal{G}_{(f,g)}$ and $\mathcal{G}_{(f',g')}$ are $\Aut(G)$-conjugates exactly when $\Gamma_{(f,g),\rightarrow}\sim\Gamma_{(f',g'),\rightarrow}$.
\end{thm}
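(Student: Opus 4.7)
My plan is to translate $\Aut(G)$-conjugation on $\mathcal{R}(G,G)$ into a two-sided action on pairs of endomorphisms, and then to read the resulting orbits off the directed graph. First I would establish the reformulation: $\mathcal{G}_{(f,g)}$ and $\mathcal{G}_{(f',g')}$ are $\Aut(G)$-conjugate if and only if there exist $\alpha,\beta\in\Aut(G)$ with $f' = \alpha\circ f\circ\beta$ and $g' = \alpha\circ g\circ\beta$. One direction comes from the direct computation $\alpha\,\mathcal{G}_{(f,g)}\,\alpha^{-1} = \mathcal{G}_{(\alpha f\alpha^{-1},\alpha g\alpha^{-1})}$. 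The other uses that $(f,g)\mapsto\mathcal{G}_{(f,g)}$ has a right-$\Aut(G)$ ambiguity: if $\mathcal{G}_{(f,g)} = \mathcal{G}_{(f',g')}$, then comparing the action $y\mapsto f(x)yg(x)^{-1}$ of the two sides on $G$ forces (via the trivial center of $G$) a unique $\psi\in\Aut(G)$ with $f = f'\circ\psi$ and $g = g'\circ\psi$. Combining and setting $\beta = \alpha^{-1}\circ\psi^{-1}$ yields the reformulation.

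For the forward direction I would apply the composition formula to get $\theta_{\alpha f\beta} = \theta_\beta\circ\theta_f\circ\theta_\alpha$, and similarly for $g$. Since $\theta_\alpha,\theta_\beta$ are bijections of $\bN_{0,n}$ fixing $0$, the arrow $\theta_f(k)\to\theta_g(k)$ in $\Gamma_{(f,g),\rightarrow}$ matches, under the vertex relabeling $\xi := \theta_\beta$ and the edge relabeling $k\mapsto\theta_\alpha^{-1}(k)$, the arrow $\theta_{f'}(\theta_\alpha^{-1}(k))\to\theta_{g'}(\theta_\alpha^{-1}(k))$ in $\Gamma_{(f',g'),\rightarrow}$. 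As $\sim$ forgets edge labels, this gives $\Gamma_{(f,g),\rightarrow}\sim\Gamma_{(f',g'),\rightarrow}$.

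For the converse, given $\xi$ realizing the $\sim$-equivalence, the tree property (no multi-edges) supplies a unique edge bijection $\mu:\bN_n\to\bN_n$ with $\theta_{f'}(\mu(k)) = \xi(\theta_f(k))$ and $\theta_{g'}(\mu(k)) = \xi(\theta_g(k))$. I would then set $\theta_\alpha := \mu^{-1}$, $\theta_\beta := \xi$ and construct the isomorphisms $\varphi_{\alpha,i}$ and $\varphi_{\beta,j}$. Expanding $f' = \alpha f\beta$ coordinate-wise gives, for each $k$ with $\theta_f(k)\neq 0$, the relation
\[\varphi_{f',\mu(k)} = \varphi_{\alpha,\mu(k)}\circ\varphi_{f,k}\circ\varphi_{\beta,\theta_f(k)},\]
and analogously from $g' = \alpha g\beta$. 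For an edge of $\Gamma_{\{f,g\}}$ incident to the vertex $0$, only one of the two equations is nontrivial, and it simply determines $\varphi_{\alpha,\mu(k)}$ from $\varphi_{\beta,v}$, where $v$ is the nonzero endpoint. For an internal edge (both endpoints nonzero), both equations apply, and eliminating $\varphi_{\alpha,\mu(k)}$ produces a compatibility equation between $\varphi_{\beta,\theta_f(k)}$ and $\varphi_{\beta,\theta_g(k)}$. Deleting the vertex $0$ from $\Gamma_{\{f,g\}}$ leaves a forest whose edges are exactly these internal ones; in each component (itself a tree) I would fix $\varphi_{\beta,v_0}$ at an anchor vertex and propagate along edges to determine the remaining values. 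The main obstacle is the consistency of this propagation: a cycle would force over-determination, but the tree hypothesis rules that out, and the $\varphi_{\alpha,i}$'s are then read off from the edge equations.
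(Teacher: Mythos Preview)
Your proposal is correct and follows the same overall strategy as the paper: both reduce $\Aut(G)$-conjugacy to the existence of $\alpha,\beta\in\Aut(G)$ with $(f',g')=(\alpha f\beta,\alpha g\beta)$ (the paper's Lemma~4.1), read off the forward implication from the composition formula $\theta_{\alpha f\beta}=\theta_\beta\circ\theta_f\circ\theta_\alpha$ (the paper's Lemma~4.2), and for the converse construct the coordinate isomorphisms $\varphi_{\alpha,i},\varphi_{\beta,j}$ by propagation along a forest.

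The organization of the backward step differs. The paper introduces an auxiliary bipartite graph $\Gamma_{\Psi,\Pi}$ on $2n$ vertices, one for each unknown $\varphi_{\psi,i}$ and $\varphi_{\pi,j}$, with an edge for every nontrivial coordinate equation; it then proves (Proposition~4.8) that a cycle in $\Gamma_{\Psi,\Pi}$ would project to a cycle in $\Gamma_{\{f,g\}}$, so $\Gamma_{\Psi,\Pi}$ is a forest, and propagates both families of unknowns simultaneously. You instead eliminate the $\varphi_{\alpha,\mu(k)}$ first: for each internal edge $k$ the two coordinate equations combine into a single compatibility between $\varphi_{\beta,\theta_f(k)}$ and $\varphi_{\beta,\theta_g(k)}$, and these constraints are carried precisely by the edges of $\Gamma_{\{f,g\}}\setminus\{0\}$, which is already a forest since $\Gamma_{\{f,g\}}$ is a tree. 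Your route is a bit more economical---no new graph is needed and the acyclicity is immediate---while the paper's bipartite formulation makes the role of the two automorphisms $\psi,\pi$ more symmetric and visually explicit. One small point you glide over: when establishing the reformulation, the bijection $\psi$ with $f=f'\circ\psi$, $g=g'\circ\psi$ is a priori only a permutation of $G$; that it is a homomorphism uses $\ker(f')\cap\ker(g')=1$, which follows from the fpf hypothesis (this is exactly the argument in the paper's Lemma~4.1).
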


From (\ref{R graph}) and Theorem \ref{key thm}, we see that $\mathcal{G}_{(f,g)}\mapsto\Gamma_{(f,g),\rightarrow}$, induces a well-defined injection
\[ \{\Aut(G)\mbox{-orbits of }\mathcal{R}(G,G)\}\longrightarrow \{\mbox{equivalence classes of $\sim$ on $\mathcal{T}(n)$}\}.\]
This map is also surjective. Indeed, given any $\Gamma$ in $\mathcal{T}(n)$, label the $n$ arrows in $\Gamma$ as $\mathfrak{a}_1,\dots,\mathfrak{a}_n$, where the numbering may be randomly chosen. For each $i\in\bN_n$, simply define $\theta_f(i)$ to be the tail and $\theta_g(i)$ to be the head of $\mathfrak{a}_i$. Then, clearly $\Gamma = \Gamma_{(f,g),\rightarrow}$ for $f,g\in\End(G)$ given by
\begin{align*}
f(x^{(1)},\dots,x^{(n)}) & = (x^{(\theta_f(1))},\dots,x^{(\theta_f(n))}),\\
g(x^{(1)},\dots,x^{(n)}) & = (x^{(\theta_g(1))},\dots,x^{(\theta_g(n))}).
\end{align*}
Theorem \ref{b thm} now follows from Proposition \ref{b prop}.

\vspace{1mm}

In the next two subsections, we shall prove Theorem \ref{key thm}, from which Theorem \ref{b thm} would follow. Before we proceed, let us make two observations.

\begin{lem}\label{observation lem}Let $(f,g)$ and $(f',g')$, with $f,g\in\End(G)$, be such that $\Gamma_{\{f,g\}}$ and $\Gamma_{\{f',g'\}}$ are trees. Then $\mathcal{G}_{(f,g)}$ and $\mathcal{G}_{(f',g')}$ are $\Aut(G)$-conjugates precisely when there exist $\psi,\pi\in\Aut(G)$ such that
\begin{equation}\label{conj eqn}(f',g') = (\psi\circ f\circ \pi,\psi\circ g\circ \pi).\end{equation}
\end{lem}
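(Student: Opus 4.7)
The plan is to treat the two implications separately, with the converse being the substantive one. For the forward direction, suppose $(f',g') = (\psi \circ f \circ \pi, \psi \circ g \circ \pi)$ with $\psi, \pi \in \Aut(G)$. Using the standard identities $\alpha \rho(y) \alpha^{-1} = \rho(\alpha(y))$ and $\alpha \lambda(y) \alpha^{-1} = \lambda(\alpha(y))$ for $\alpha \in \Aut(G)$, together with the reparametrization $y = \pi(x)$ (which only needs $\pi$ to be a bijection), a direct calculation gives $\mathcal{G}_{(f',g')} = \psi \mathcal{G}_{(f,g)} \psi^{-1}$.

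For the converse, suppose $\mathcal{G}_{(f',g')} = \alpha \mathcal{G}_{(f,g)} \alpha^{-1}$ for some $\alpha \in \Aut(G)$. Setting $\psi := \alpha$, $\tilde{f} := \psi \circ f$, and $\tilde{g} := \psi \circ g$, the same conjugation identities rewrite the right-hand side as $\mathcal{G}_{(\tilde{f}, \tilde{g})}$, and since $\alpha$ is a bijection the pair $(\tilde{f}, \tilde{g})$ remains fpf (equivalently, $\Gamma_{\{\tilde{f}, \tilde{g}\}}$ is still a tree). The problem reduces to producing some $\pi \in \Aut(G)$ with $f' = \tilde{f} \circ \pi$ and $g' = \tilde{g} \circ \pi$ from the set equality $\mathcal{G}_{(f',g')} = \mathcal{G}_{(\tilde{f}, \tilde{g})}$. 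Because $G = T^n$ has trivial center, the normal form $\rho(a)\lambda(b)$ in $\Hol(G)$ is unique, so for each $x \in G$ this set equality yields some $y \in G$ satisfying $\tilde{g}(y) = g'(x)$ and $\tilde{f}(y) = f'(x)$. The key observation, which I will use repeatedly, is that fpf of $(\tilde{f}, \tilde{g})$ implies $\ker(\tilde{f}) \cap \ker(\tilde{g}) = \{1\}$: any $z$ in the intersection satisfies $\tilde{f}(z) = 1 = \tilde{g}(z)$, hence $z = 1$. Consequently, the map $z \mapsto (\tilde{f}(z), \tilde{g}(z))$ is injective, so $y$ is uniquely determined; define $\pi(x) := y$. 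A symmetric construction (swapping the roles of the two pairs, using that $(f',g')$ is also fpf since $\Gamma_{\{f',g'\}}$ is a tree) produces a two-sided set-theoretic inverse to $\pi$.

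The main obstacle will be to upgrade $\pi$ from a bijection of sets to a group homomorphism, and this is where the injectivity of $z \mapsto (\tilde{f}(z), \tilde{g}(z))$ pays off. The plan is to exploit that $f'$ and $g'$ are themselves homomorphisms, so that for all $x, y \in G$,
\[
\tilde{f}(\pi(xy)) = f'(xy) = f'(x) f'(y) = \tilde{f}(\pi(x))\tilde{f}(\pi(y)) = \tilde{f}(\pi(x)\pi(y)),
\]
and analogously $\tilde{g}(\pi(xy)) = \tilde{g}(\pi(x)\pi(y))$; the injectivity above then forces $\pi(xy) = \pi(x)\pi(y)$. Combined with bijectivity, this yields $\pi \in \Aut(G)$ and hence the desired decomposition \eqref{conj eqn}.
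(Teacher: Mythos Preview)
Your proof is correct and follows essentially the same approach as the paper: both use the uniqueness of the decomposition $\rho(a)\lambda(b)$ (from $Z(G)=1$) to extract a set-map $\pi$ matching the two parametrizations, and then use $\ker(f)\cap\ker(g)=\{1\}$ (from the fpf condition) to upgrade $\pi$ to a homomorphism. The only cosmetic differences are that the paper obtains $\pi\in\mathrm{Perm}(G)$ in one stroke (composing the two bijective parametrizations of the same regular subgroup) rather than building it pointwise and constructing an inverse, and that you have the labels ``forward'' and ``converse'' swapped relative to the ``precisely when'' phrasing.
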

\begin{proof}Recall that $(f,g)$ and $(f',g')$ are fpf by Proposition \ref{criterion prop}, so then $\mathcal{G}_{(f,g)}$ and $\mathcal{G}_{(f',g')}$ are regular subgroups. They are $\Aut(G)$-conjugates if and only if there exist $\psi\in\Aut(G)$ and $\pi\in\mathrm{Perm}(G)$ such 
that for all $x\in G$, we have \begin{align}\label{fg eqn}
\rho(g'(x))\cdot\lambda(f'(x)) & = \psi\rho(g(\pi(x)))\psi^{-1}\cdot\psi\lambda(f(\pi(x)))\psi^{-1}\\\notag
& =\rho((\psi\circ g\circ \pi)(x))\cdot\lambda((\psi\circ f\circ \pi)(x)).
\end{align}
Note that $\lambda(G)$ and $\rho(G)$ intersect trivially because $G$ has trivial center. It then follows that the equation (\ref{fg eqn}) holds for all $x\in G$ exactly when (\ref{conj eqn}) is satisfied. Thus, it remains to prove that $\pi$ must be a homomorphism in this case. Since $\psi\in\Aut(G)$, from (\ref{conj eqn}) we deduce that $f\circ\pi$ and $g\circ\pi$ are both homomorphisms, so for all $x,y\in G$, we have
\[ \pi(xy) \equiv \pi(x) \pi(y)\hspace{-2mm}\pmod{\ker(f)\cap\ker(g)}.\]
But $(f,g)$ is fpf, whence $\ker(f)$ and $\ker(g)$ intersect trivially. We see that $\pi$ is indeed a homomorphism. 
\end{proof}

\begin{lem}\label{observation lem'}Let $f\in \End(G)$ and $\psi,\pi\in\Aut(G)$. Then, we have
\[ \theta_{\psi\circ f\circ\pi} = \theta_\pi\circ\theta_f\circ\theta_\psi\mbox{ in }\mathrm{Map}(\bN_{0,n},\bN_{0,n}),\]
and for each $i\in\bN_{n}$, we also have
\[ \varphi_{\psi\circ f\circ \pi,i} = \varphi_{\psi,i}\circ\varphi_{f,\theta_\psi(i)}\circ\varphi_{\pi,(\theta_f\circ\theta_\psi)(i)}.\]
Here the notation is as in Definition \ref{End def}.
\end{lem}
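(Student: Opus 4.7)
The plan is a direct computation tracing the action of $\psi \circ f \circ \pi$ coordinate-by-coordinate, followed by a brief check that the resulting data satisfies the normalization condition (\ref{phi cond}) so that $\theta_{\psi\circ f\circ\pi}$ is uniquely determined.

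First, I would apply $\pi$ to an arbitrary element $(x^{(1)},\dots,x^{(n)})$. By Definition \ref{End def}, its $j$th coordinate is $y^{(j)} = \varphi_{\pi,j}(x^{(\theta_\pi(j))})$. Next, I would apply $f$ to $(y^{(1)},\dots,y^{(n)})$, so that the $k$th coordinate becomes $z^{(k)} = \varphi_{f,k}(y^{(\theta_f(k))}) = \varphi_{f,k}\bigl(\varphi_{\pi,\theta_f(k)}(x^{(\theta_\pi(\theta_f(k)))})\bigr)$. Finally, I would apply $\psi$, obtaining in the $i$th coordinate
\[ \varphi_{\psi,i}\bigl(z^{(\theta_\psi(i))}\bigr) = \varphi_{\psi,i}\circ\varphi_{f,\theta_\psi(i)}\circ\varphi_{\pi,(\theta_f\circ\theta_\psi)(i)}\bigl(x^{((\theta_\pi\circ\theta_f\circ\theta_\psi)(i))}\bigr). \]
Reading this off against the canonical form of $\psi\circ f\circ\pi$ in Definition \ref{End def}, the two claimed formulas fall out immediately, provided the normalization (\ref{phi cond}) holds. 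The conventions $\theta_f(0)=0$ and $\varphi_{f,0}$ trivial are precisely what makes this chase go through uniformly, without needing to split cases for whether $\theta_f(\theta_\psi(i))$ is zero.

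The one point requiring care is verifying condition (\ref{phi cond}) for the composite, so that $\theta_{\psi\circ f\circ\pi}$ really equals $\theta_\pi\circ\theta_f\circ\theta_\psi$ on the nose rather than only up to the ambiguity described in Definition \ref{End def}. Since $\psi,\pi\in\Aut(G)$, by Proposition \ref{End prop} the maps $\theta_\psi,\theta_\pi$ restrict to bijections of $\bN_n$ with $\theta_\psi(0)=\theta_\pi(0)=0$, and every $\varphi_{\psi,i},\varphi_{\pi,j}$ with $i,j\in\bN_n$ is a bijection. Suppose $(\theta_\pi\circ\theta_f\circ\theta_\psi)(i)\ne0$ for some $i\in\bN_n$; then since $\theta_\pi^{-1}(0)=0$, we must have $(\theta_f\circ\theta_\psi)(i)\ne0$, which by (\ref{phi cond}) applied to $f$ makes $\varphi_{f,\theta_\psi(i)}$ bijective. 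The other two factors $\varphi_{\psi,i}$ and $\varphi_{\pi,(\theta_f\circ\theta_\psi)(i)}$ are bijections by the above, so the composite $\varphi_{\psi,i}\circ\varphi_{f,\theta_\psi(i)}\circ\varphi_{\pi,(\theta_f\circ\theta_\psi)(i)}$ is a bijection, hence non-trivial, as required.

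I do not anticipate a real obstacle here; the lemma is essentially a bookkeeping statement and the only thing one can get wrong is confusing the direction in which indices are permuted (note the reversal: $\theta_{\psi\circ f\circ\pi}$ is $\theta_\pi\circ\theta_f\circ\theta_\psi$, not $\theta_\psi\circ\theta_f\circ\theta_\pi$). That reversal is natural once one observes that the $i$th output coordinate of $\psi$ depends on the $\theta_\psi(i)$th input coordinate, so reading composition outside-in on the \emph{output} index corresponds to reading $\theta$-composition inside-out on the \emph{input} index.
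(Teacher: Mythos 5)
Your proposal is correct and follows essentially the same route as the paper: a coordinate-by-coordinate computation of $\psi\circ f\circ\pi$ giving the composite $\varphi_{\psi,i}\circ\varphi_{f,\theta_\psi(i)}\circ\varphi_{\pi,(\theta_f\circ\theta_\psi)(i)}$ applied to $x^{((\theta_\pi\circ\theta_f\circ\theta_\psi)(i))}$, followed by verifying the normalization (\ref{phi cond}) using the bijectivity of the $\theta$- and $\varphi$-data attached to the automorphisms $\psi,\pi$ together with (\ref{phi cond}) for $f$. The paper's verification of (\ref{phi cond'}) is the same argument, merely phrased as an equivalence rather than a direct implication.
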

\begin{proof}We compute that
\begin{align*}(x^{(1)},\dots, x^{(n)}) 
& \xmapsto{\hspace{2mm}\pi\hspace{2mm}} (\dots,\varphi_{\pi,i}(x^{(\theta_\pi(i))}),\dots)\\
& \xmapsto{\hspace{2mm}f\hspace{2mm}} (\dots, (\varphi_{f,i}\circ\varphi_{\pi,\theta_f(i)})(x^{((\theta_\pi\circ\theta_f)(i))}),\dots)\\
& \xmapsto{\hspace{2mm}\psi\hspace{2mm}} (\dots, (\varphi_{\psi,i}\circ\varphi_{f,\theta_\psi(i)}\circ\varphi_{\pi,(\theta_f\circ\theta_\psi)(i)})(x^{((\theta_\pi\circ\theta_f\circ\theta_\psi)(i))}),\dots),
\end{align*}
and from here, the claims are clear once we verify the condition (\ref{phi cond}). More precisely, we need to check that for each $i\in\bN_{n}$, we have
\begin{equation}\label{phi cond'}  \varphi_{\psi,i}\circ\varphi_{f,\theta_\psi(i)}\circ\varphi_{\pi,(\theta_f\circ\theta_\psi)(i)}\mbox{ is non-trivial when }(\theta_\pi\circ\theta_f\circ\theta_\psi)(i)\neq0.\end{equation}
Since $\psi,\pi\in\Aut(G)$, the maps $\theta_\pi$ and  $\varphi_{\psi,i}$ are bijections, and $\varphi_{\pi,(\theta_f\circ\theta_\psi)(i)}$ is a bijection when $(\theta_f\circ\theta_\psi)(i)\neq0$. It follows that (\ref{phi cond'}) is equivalent to
\[ \varphi_{f,\theta_\psi(i)} \mbox{ is non-trivial when }\theta_f(\theta_\psi(i))\neq0,\]
and this indeed holds by (\ref{phi cond}).
\end{proof}

\begin{assumption}
In the rest of this paper, we shall fix pairs $(f,g)$ and $(f',g')$, with $f,g,f',g'\in\End(G)$, to be such that $\Gamma_{\{f,g\}}$ and $\Gamma_{\{f',g'\}}$ are trees.
\end{assumption}

\subsection{Proof of Theorem \ref{key thm}: forward implication} This direction is easy to prove. Suppose that $\mathcal{G}_{(f,g)}$ and $\mathcal{G}_{(f',g')}$ are $\Aut(G)$-conjugates. Then
\[ (\theta_{f'},\theta_{g'}) = (\theta_\pi\circ\theta_f\circ\theta_\psi,\theta_\pi\circ\theta_g\circ\theta_\psi)\mbox{ for some }\psi,\pi\in\Aut(G)\]
by Lemmas \ref{observation lem} and \ref{observation lem'}. Since $\psi,\pi\in\Aut(G)$, the maps
\[ \theta_\psi,\theta_\pi:\bN_{0,n}\longrightarrow \bN_{0,n}\mbox{ are bijections, with }\theta_\psi(0)=0 =\theta_\pi(0).\]
For any $v,v'\in\bN_{0,n}$, we then see from Definition \ref{Gamma def} that
\begin{align*}
&\hspace{1.5cm}\mbox{there is an arrow $v\longrightarrow v'$ in $\Gamma_{(f,g),\rightarrow}$}\\
&\iff (v,v') = (\theta_f(i),\theta_g(i))\mbox{ for some }i\in\bN_{n}\\
&\iff (v,v') = (\theta_f(\theta_\psi(j)),\theta_g(\theta_\psi(j)))\mbox{ for some }j\in\bN_n\\
&\iff (\theta_\pi(v),\theta_\pi(v')) = ((\theta_\pi\circ\theta_f\circ\theta_\psi)(j),(\theta_\pi\circ\theta_g\circ\theta_\psi)(j))\mbox{ for some }j\in\bN_n\\
&\iff (\theta_\pi(v),\theta_\pi(v')) = (\theta_{f'}(j),\theta_{g'}(j))\mbox{ for some }j\in\bN_n\\
&\iff\mbox{there is an arrow $\theta_\pi(v)\longrightarrow \theta_\pi(v')$ in $\Gamma_{(f',g'),\rightarrow}$}.
\end{align*}
It thus follows that $\Gamma_{(f,g),\rightarrow}$ and $\Gamma_{(f',g'),\rightarrow}$ are equivalent via $\theta_\pi$. This proves the forward implication of Theorem \ref{key thm}.

\vspace{1mm}
\subsection{Proof of Theorem \ref{key thm}: backward implication}

This direction is a lot harder to prove. Suppose that $\Gamma_{(f,g),\rightarrow}$ and $\Gamma_{(f',g'),\rightarrow}$ are equivalent, which means that there exists a bijection
\[ \xi\in\mathrm{Map}(\bN_{0,n},\bN_{0,n})\mbox{ with }\xi(0) = 0\]
such that for any $v,v'\in\bN_{0,n}$, there is an arrow $v\longrightarrow v'$ in $\Gamma_{(f,g),\rightarrow}$ precisely when there is an arrow $\xi(v)\longrightarrow \xi(v')$ in $\Gamma_{(f',g'),\rightarrow}$. By Lemmas \ref{observation lem} and \ref{observation lem'}, we need to show that there exist $\psi,\pi\in\Aut(G)$ such that
\begin{equation}\label{cond1} (\theta_{f'},\theta_{g'}) = (\theta_\pi\circ\theta_f\circ\theta_\psi,\theta_\pi\circ\theta_g\circ \theta_\psi),\end{equation}
and for each $i\in\bN_n$, we have the equalities
\begin{equation}\label{cond2}
\varphi_{f',i}  = \varphi_{\psi,i}\circ\varphi_{f,\theta_\psi(i)}\circ\varphi_{\pi,(\theta_f\circ\theta_\psi)(i)},\,\
\varphi_{g',i}  = \varphi_{\psi,i}\circ\varphi_{g,\theta_\psi(i)}\circ\varphi_{\pi,(\theta_g\circ\theta_\psi)(i)}.\end{equation}
In other words, we need to find bijections
\[ \theta_\psi,\theta_\pi\in\mathrm{Map}(\bN_{0,n},\bN_{0,n})\mbox{ with }\theta_\psi(0) = 0 = \theta_\pi(0)\]
satisfying (\ref{cond1}), as well as isomorphisms
\begin{equation}\label{solution}\varphi_{\psi,i}\in\mathrm{Iso}(T^{(\theta_\psi(i))},T^{(i)}),\, \varphi_{\pi,i}\in\mathrm{Iso}(T^{(\theta_\pi(i))},T^{(i)})\mbox{ for }i\in\bN_n\end{equation}
satisfying both equations in (\ref{cond2}).

\vspace{1mm}

First, we consider condition (\ref{cond1}). Let us take $\theta_\pi = \xi$, and we shall define $\theta_\psi$ via the next proposition. It shall be helpful to recall Definition \ref{Gamma def}.

\begin{prop}\label{theta psi def}For each $i\in\bN_{n}$, there is a unique $\theta_\psi(i)\in\bN_{n}$ such that
\[ (\theta_\pi^{-1}(\theta_{f'}(i)), \theta_\pi^{-1}(\theta_{g'}(i))) = (\theta_f(\theta_\psi(i)),\theta_g(\theta_\psi(i))). \]
Moreover, the map
\[ \theta_\psi\in\mathrm{Map}(\bN_{0,n}, \bN_{0,n}),\mbox{ where we define }\theta_\psi(0) = 0,\]
is a bijection.
\end{prop}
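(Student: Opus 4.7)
The plan is to read off $\theta_\psi(i)$ from the arrow correspondence guaranteed by the equivalence $\xi$ (which we set equal to $\theta_\pi$), and to use the tree property of both $\Gamma_{\{f,g\}}$ and $\Gamma_{\{f',g'\}}$ to pin down uniqueness and bijectivity. Fix $i\in\bN_n$. By Definition \ref{Gamma def}, the edge $\mathfrak{e}'_i$ of $\Gamma_{\{f',g'\}}$ gives rise to the arrow $\theta_{f'}(i)\longrightarrow\theta_{g'}(i)$ in $\Gamma_{(f',g'),\rightarrow}$. Since $\theta_\pi=\xi$ is a bijection implementing the equivalence of the two directed trees, this arrow corresponds to the arrow $\theta_\pi^{-1}(\theta_{f'}(i))\longrightarrow\theta_\pi^{-1}(\theta_{g'}(i))$ in $\Gamma_{(f,g),\rightarrow}$, which in turn comes from some edge $\mathfrak{e}_j$ of $\Gamma_{\{f,g\}}$. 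I set $\theta_\psi(i):=j$.

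For uniqueness of $j$, the key observation is that a tree on $n+1$ vertices with exactly $n$ edges has neither self-loops nor multi-edges. Hence for each ordered pair of distinct adjacent vertices $(v,v')$ in $\Gamma_{\{f,g\}}$, there is at most one index $j\in\bN_n$ satisfying $(\theta_f(j),\theta_g(j))=(v,v')$. This pins $j$ down and shows $\theta_\psi(i)\in\bN_n$ by construction.

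It remains to show that, extended via $\theta_\psi(0):=0$, the map $\theta_\psi:\bN_{0,n}\longrightarrow\bN_{0,n}$ is a bijection. Since both sets are finite of the same size, it suffices to prove injectivity, and injectivity at $0$ is automatic since $\theta_\psi(i)\in\bN_n$ for $i\in\bN_n$. If $\theta_\psi(i)=\theta_\psi(i')=j$ with $i,i'\in\bN_n$, then applying the defining identity for $\theta_\psi$ and then $\theta_\pi$ gives
\[(\theta_{f'}(i),\theta_{g'}(i)) = (\theta_\pi(\theta_f(j)),\theta_\pi(\theta_g(j))) = (\theta_{f'}(i'),\theta_{g'}(i')),\]
and the same no-multi-edges argument applied this time to the tree $\Gamma_{\{f',g'\}}$ forces $i=i'$.

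The only subtle point I anticipate is the handling of arrows incident to the vertex $0$. However, since $\xi(0)=0$ forces $\theta_\pi(0)=0$ and hence $\theta_\pi^{-1}(0)=0$, the arrow correspondence treats arrows incident to $0$ on the same footing as all other arrows, so no separate case analysis is needed even when $\theta_f(j)=0$ or $\theta_g(j)=0$. Everything else is bookkeeping with the directed-tree equivalence, and the construction of $\theta_\psi$ here is exactly what will feed into verifying condition (\ref{cond1}) in the remainder of the proof of Theorem \ref{key thm}.
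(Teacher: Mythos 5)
Your proposal is correct and follows essentially the same route as the paper: you define $\theta_\psi(i)$ via the arrow correspondence given by $\xi=\theta_\pi$, obtain uniqueness from the absence of multi-edges (and loops) in the tree $\Gamma_{\{f,g\}}$, and deduce injectivity, hence bijectivity, from the same property of $\Gamma_{\{f',g'\}}$. Nothing is missing.
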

\begin{proof}For $i\in \bN_n$, there is an arrow $\theta_{f'}(i)\longrightarrow\theta_{g'}(i)$ in $\Gamma_{(f',g'),\rightarrow}$ by definition, so there is an arrow $\theta_\pi^{-1}(\theta_{f'}(i))\longrightarrow \theta_\pi^{-1}(\theta_{g'}(i))$ in $\Gamma_{(f,g),\rightarrow}$. This means that
\[ (\theta_\pi^{-1}(\theta_{f'}(i)), \theta_\pi^{-1}(\theta_{g'}(i))) = (\theta_f(j_i),\theta_g(j_i))\]
for some $j_i\in\bN_n$, which is unique, for otherwise there would be two distinct edges joining $\theta_\pi^{-1}(\theta_{f'}(i))$ and $\theta_\pi^{-1}(\theta_{g'}(i))$ in $\Gamma_{\{f,g\}}$, and $\Gamma_{\{f,g\}}$ would not be a tree. Similarly distinct $i$ give rise to distinct $j_i$ because $\Gamma_{\{f',g'\}}$ is a tree. The claims are then clear by setting $\theta_\psi(i) =j_i$.\end{proof}

We have thus chosen bijections $\theta_\psi$ and $\theta_\pi$ such that (\ref{cond1}) is satisfied. 
Now, we also need to show that the equations (\ref{cond2}), for $i$ ranging over $\bN_n$, have a common solution (\ref{solution}). Let us put
\begin{align*}
\bN_{n,\mathbf{f}} & = \{ i \in \bN_n : \theta_{f'}(i) = 0\} = \{i\in \bN_n:\theta_f(\theta_\psi(i)) = 0\},\\
\bN_{n,\mathbf{g}} & = \{i \in\bN_n : \theta_{g'}(i) = 0\} = \{i\in\bN_n:\theta_g(\theta_\psi(i)) = 0\},
\end{align*}
where the latter equalities hold by (\ref{cond1}) and the fact that $\theta_\pi$ is bijection with $\theta_\pi(0) = 0$. By Definition \ref{End def}, we have
\[\begin{cases}
 \varphi_{f',i} \in \Hom(T^{(\theta_{f'}(i))},T^{(i)}),\,\ \varphi_{f,\theta_\psi(i)}\in \Hom(T^{(\theta_f(\theta_\psi(i)))},T^{(\theta_\psi(i))}),\\
  \varphi_{g',i} \in \Hom(T^{(\theta_{g'}(i))},T^{(i)}),\,\ \varphi_{g,\theta_\psi(i)}\in \Hom(T^{(\theta_g(\theta_\psi(i)))},T^{(\theta_\psi(i))}).\end{cases}\]
Thus, the first and second equations in (\ref{cond2}), respectively, trivially hold for $i\in\bN_{n,\mathbf{f}}$ and $i\in\bN_{n,\mathbf{g}}$. So we only need to find (\ref{solution}) which satisfies
\begin{align}\label{cond2f}
\varphi_{f',i} & = \varphi_{\psi,i}\circ\varphi_{f,\theta_\psi(i)}\circ\varphi_{\pi,(\theta_f\circ\theta_\psi)(i)} \mbox{ for }i\in\bN_n\setminus\bN_{n,\mathbf{f}}, \\\label{cond2g}
\varphi_{g',i} & = \varphi_{\psi,i}\circ\varphi_{g,\theta_\psi(i)} \circ\varphi_{\pi,(\theta_g\circ\theta_\psi)(i)}\mbox{ for }i\in\bN_n\setminus\bN_{n,\mathbf{g}}.
\end{align}
We are in a situation similar to (\ref{ei}). Again, we shall use graph theory and represent these equations as edges on a graph, as follows.

\begin{definition}\label{Gamma bipartite}Let us put
\[ \Psi = \{1_\psi,\dots,n_\psi\}\mbox{ and }\Pi = \{1_\pi,\dots,n_\pi\}.\]
Define $\Gamma_{\Psi,\Pi}$ to be the undirected simple graph with vertex set $\Psi\sqcup\Pi$, and we draw one edge $\mathfrak{e}_{i,\mathbf{f}}$ between $i_\psi$ and $(\theta_f\circ\theta_\psi)(i)_\pi$ for each $i\in\bN_n\setminus\bN_{n,\mathbf{f}}$, as well as one edge $\mathfrak{e}_{i,\mathbf{g}}$ between $i_\psi$ and $(\theta_g\circ\theta_\psi)(i)_\pi$ for each $i\in\bN_n\setminus\bN_{n,\mathbf{g}}$. For all $i\in\bN_n$,  we have $\theta_\psi(i) \in \bN_n$, and so $(\theta_f\circ\theta_\psi)(i)\neq (\theta_g\circ\theta_\psi)(i)$ because $\Gamma_{\{f,g\}}$ has no loop. This means that $\Gamma_{\Psi,\Pi}$ is indeed a simple graph. Also $\Gamma_{\Psi,\Pi}$ is a bipartite graph, with parts $\Psi$ and $\Pi$.
\end{definition}

\begin{definition}\label{def1}For each $i\in\bN_{n}$, define 
\[ \begin{cases}
\Phi_{i,\mathbf{f}}:\mathrm{Iso}(T^{(\theta_\psi(i))},T^{(i)})\longrightarrow\mathrm{Iso}(T^{(\theta_{f'}(i))},T^{((\theta_f\circ\theta_\psi)(i))})&\mbox{ when }i\notin\bN_{n,\mathbf{f}},\\
\Phi_{i,\mathbf{g}}:\mathrm{Iso}(T^{(\theta_\psi(i))},T^{(i)})\longrightarrow\mathrm{Iso}(T^{(\theta_{g'}(i))},T^{((\theta_g\circ\theta_\psi)(i))})&\mbox{ when }i\notin\bN_{n,\mathbf{g}},
\end{cases}\]
respectively, by setting
\[\begin{cases}\Phi_{i,\mathbf{f}}(\varphi) = \varphi_{f,\theta_\psi(i)}^{-1}\circ\varphi^{-1}\circ\varphi_{f',i} &\mbox{ when }i\notin\bN_{n,\mathbf{f}},\\
\Phi_{i,\mathbf{g}}(\varphi) = \varphi_{g,\theta_\psi(i)}^{-1}\circ\varphi^{-1}\circ\varphi_{g',i}&\mbox{ when }i\notin\bN_{n,\mathbf{g}}.
\end{cases}\]
Note that these $\Phi_{i,\mathbf{f}}$ and $\Phi_{i,\mathbf{g}}$ are well-defined bijections by condition (\ref{phi cond}).
\end{definition}

The vertices in $\Psi$ and $\Pi$, respectively, should be viewed as the variables
\[ \varphi_{\psi,1},\dots,\varphi_{\psi,n}\mbox{ and }\varphi_{\pi,1},\dots,\varphi_{\pi,n}\]
which we wish to solve. For each $i\in\bN_n$, the edges $\mathfrak{e}_{i,\mathbf{f}}$ and $\mathfrak{e}_{i,\mathbf{g}}$, respectively, represent (\ref{cond2f}) and (\ref{cond2g}) when $i\not\in\bN_{n,\mathbf{f}}$ and $i\not\in\bN_{n,\mathbf{g}}$. Observe that:
\begin{equation}\label{observation2}
\begin{cases}
\mbox{(\ref{cond2f}) is equivalent to $\varphi_{\pi,(\theta_f\circ\theta_\psi)(i)} = \Phi_{i,\mathbf{f}}(\varphi_{\psi,i})$}&\mbox{ for }i\not\in\bN_{n,\mathbf{f}},\\
\mbox{(\ref{cond2g}) is equivalent to $\varphi_{\pi,(\theta_g\circ\theta_\psi)(i)} = \Phi_{i,\mathbf{g}}(\varphi_{\psi,i})$}&\mbox{ for }i\not\in\bN_{n,\mathbf{g}}.
\end{cases}\end{equation}
Therefore, analogous to the case of (\ref{ei}), we may solve the equations (\ref{cond2f}) and (\ref{cond2g}) by ``following the edges" in the graph $\Gamma_{\Psi,\Pi}$. Let us first consider an explicit example to illustrate the idea.

\begin{example}\label{ex}Take $n=3$ and suppose that
\begin{align*}
f(x^{(1)},x^{(2)},x^{(3)}) & =  (\varphi_{f,1}(x^{(1)}),\varphi_{f,2}(x^{(3)}),\varphi_{f,3}(x^{(2)})),\\
g(x^{(1)},x^{(2)},x^{(3)}) & = (\varphi_{g,1}(x^{(0)}),\varphi_{g,2}(x^{(1)}),\varphi_{g,3}(x^{(1)})),\\
f'(x^{(1)},x^{(2)},x^{(3)}) & =  (\varphi_{f',1}(x^{(1)}),\varphi_{f',2}(x^{(2)}),\varphi_{f',3}(x^{(3)})),\\
g'(x^{(1)},x^{(2)},x^{(3)}) & = (\varphi_{g',1}(x^{(2)}),\varphi_{g',2}(x^{(0)}),\varphi_{g',3}(x^{(2)})).
\end{align*}
The graphs $\Gamma_{(f,g),\rightarrow}$ and $\Gamma_{(f',g'),\rightarrow}$, respectively, are equal to
\[\begin{tikzpicture}
\node at (0,0) [name = 0] {$0$};
\node at (1.5,0) [name = 1] {$1$};
\node at (3,1) [name = 2] {$2$};
\node at (3,-1) [name = 3] {$3$};
\draw[->] (1) edge (0) (2) edge (1) (3) edge (1);
\node at (5,0) {and};
\node at (7,0) [name = 0'] {$0$};
\node at (8.5,0) [name = 2'] {$2$};
\node at (10,1) [name = 1'] {$1$};
\node at (10,-1) [name = 3'] {$3$};
\draw[->] (2') edge (0') (1') edge (2') (3') edge (2');
\end{tikzpicture}\]
according to Definition \ref{Gamma def}. We may take $\xi = \theta_\pi$ to be the bijection
\[ \theta_\pi(0) = 0,\,\ \theta_\pi(1) = 2,\,\ \theta_\pi(2) = 1,\,\ \theta_\pi(3) = 3,\]
and following Proposition \ref{theta psi def}, we see that $\theta_\psi$ in turn is given by
\[ \theta_\psi(0) = 0,\,\ \theta_\psi(1) = 3,\,\ \theta_\psi(2) = 1,\,\ \theta_\psi(3) = 2. \]
Then, the graph $\Gamma_{\Psi,\Pi}$ is equal to
\[\begin{tikzpicture}
\node at (0,3) [name = 1] {$1_\psi$};
\node at (0,1.5) [name = 2] {$2_\psi$};
\node at (0,0) [name = 3] {$3_\psi$};
\node at (3,3) [name = 1'] {$1_\pi$};
\node at (3,1.5) [name = 2'] {$2_\pi$};
\node at (3,0) [name = 3'] {$3_\pi$}; 
\draw[-] (1) -- (2') (2) -- (1') (3) -- (3');
\draw[dotted,-] (1) -- (1') (3) -- (1');
\end{tikzpicture}\]
where we have used solid and dotted lines, respectively, to denote the edges $\mathfrak{e}_{i,\mathbf{f}}$ and $\mathfrak{e}_{i,\mathbf{g}}$. To get a solution to (\ref{cond2f}) and (\ref{cond2g}), we can first pick 
\[ \varphi_{\psi,1}\in \mathrm{Iso}(T^{(\theta_\psi(1))},T^{(1)})\]
to be any isomorphism. Then, we ``follow the edges" in $\Gamma_{\Psi,\Pi}$ and define
\[\begin{cases}
 \varphi_{\pi,1} = \Phi_{1,\mathbf{g}}(\varphi_{\psi,1}),\,\ \varphi_{\pi,2}  = \Phi_{1,\mathbf{f}}(\varphi_{\psi,1}).\\
 \varphi_{\psi,2}  = \Phi_{2,\mathbf{f}}^{-1}(\varphi_{\pi,1}),\,\ \varphi_{\psi,3}  = \Phi_{3,\mathbf{g}}^{-1}(\varphi_{\pi,1}),\\
 \varphi_{\pi,3} = \Phi_{3,\mathbf{f}}(\varphi_{\psi,3}),\\
\end{cases}\]
recursively. The equations (\ref{cond2f}) and (\ref{cond2g}) are then all satisfied by (\ref{observation2}). We have thus obtained a common solution (\ref{solution}) to the equations (\ref{cond2}).\end{example}

The idea of Example \ref{ex} is somewhat similar to that of Example \ref{ex1}. But unlike Proposition \ref{criterion prop}, in this case we do not need $\Gamma_{\Psi,\Pi}$ to be a tree for the idea to work in general. Indeed, in the special case that
\[ f' = f = \mathrm{Id}_G,\,\ g' = g = \mbox{the trivial map},\, \theta_\pi = \mbox{Id}_{\bN_{0,n}} = \theta_{\psi},\]
the graph $\Gamma_{\Psi,\Pi}$ has exactly $n$ edges, namely
\[ \begin{tikzpicture}
\node at (0,0) [name =1] {$i_\psi$};
\node at (3,0) [name =1'] {$i_\pi$};
\path[-] (1) edge node [above] {$\mathfrak{e}_{i,\mathbf{f}}$} (1');
\node at (5,0) {for $i\in\bN_{n}$,};
\end{tikzpicture}\]
which is not a tree, but certainly (\ref{cond2}) has a solution (\ref{solution}) corresponding to $\psi = \mathrm{Id}_G = \pi$. Nevertheless, we do need $\Gamma_{\Psi,\Pi}$ to have no cycle.

\begin{prop}\label{no cycle prop}There is no cycle in $\Gamma_{\Psi,\Pi}$.
\end{prop}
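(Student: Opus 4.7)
My plan is to argue by contradiction: I take a purported cycle in $\Gamma_{\Psi,\Pi}$ and transport it into a cycle in the tree $\Gamma_{\{f,g\}}$, yielding a contradiction. Since $\Gamma_{\Psi,\Pi}$ is a simple bipartite graph with parts $\Psi$ and $\Pi$, any cycle in it has even length $2k$ with $k\geq 2$, of the shape
$$i_1^\psi - j_1^\pi - i_2^\psi - j_2^\pi - \cdots - i_k^\psi - j_k^\pi - i_1^\psi,$$
where $i_1,\ldots,i_k\in\bN_n$ are pairwise distinct and $j_1,\ldots,j_k\in\bN_n$ are pairwise distinct (subscripts read mod $k$).

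First I analyse the pair of edges incident to each $i_s^\psi$. By Definition \ref{Gamma bipartite}, only two edges at $i_s^\psi$ are possible: the edge $\mathfrak{e}_{i_s,\mathbf{f}}$ to $(\theta_f\circ\theta_\psi)(i_s)_\pi$, which exists precisely when $i_s\not\in\bN_{n,\mathbf{f}}$, and the edge $\mathfrak{e}_{i_s,\mathbf{g}}$ to $(\theta_g\circ\theta_\psi)(i_s)_\pi$, which exists precisely when $i_s\not\in\bN_{n,\mathbf{g}}$. Because $i_s^\psi$ has degree at least $2$ in the cycle, both edges must exist, and the two $\Pi$-endpoints are forced to be $\{j_{s-1}^\pi,j_s^\pi\}$. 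Translating back to $\Gamma_{\{f,g\}}$, the single edge $\mathfrak{e}_{\theta_\psi(i_s)}$ then joins the vertices $j_{s-1}$ and $j_s$.

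To finish, I observe that $\theta_\psi$ is a bijection on $\bN_{0,n}$ and the $i_s$ are distinct, so the indices $\theta_\psi(i_1),\ldots,\theta_\psi(i_k)$ are also distinct. Therefore $\mathfrak{e}_{\theta_\psi(i_1)},\ldots,\mathfrak{e}_{\theta_\psi(i_k)}$ are $k$ distinct edges in $\Gamma_{\{f,g\}}$ tracing the closed walk $j_1 - j_2 - \cdots - j_k - j_1$ on $k$ distinct vertices — in other words, a cycle in $\Gamma_{\{f,g\}}$, contradicting the tree hypothesis. The only mild subtlety I anticipate is the case $k=2$, where the cycle extracted consists of two parallel edges between $j_1$ and $j_2$; since $\Gamma_{\{f,g\}}$ is a \emph{multigraph} that happens to be a tree, it has no multiple edges, so this is still a contradiction.
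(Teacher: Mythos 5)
Your proposal is correct and follows essentially the same route as the paper: both arguments use the fact that the only possible edges at a $\Psi$-vertex are $\mathfrak{e}_{i,\mathbf{f}}$ and $\mathfrak{e}_{i,\mathbf{g}}$ to conclude that each $\Psi$-vertex on the cycle contributes the single edge $\mathfrak{e}_{\theta_\psi(i)}$ of $\Gamma_{\{f,g\}}$ joining its two $\Pi$-neighbours, thereby transporting the cycle into $\Gamma_{\{f,g\}}$ and contradicting that it is a tree. Your explicit check that the edges $\mathfrak{e}_{\theta_\psi(i_1)},\dots,\mathfrak{e}_{\theta_\psi(i_k)}$ are distinct (covering the $k=2$ parallel-edge case) is a point the paper leaves implicit, but it is the same argument.
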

\begin{proof}Suppose for contradiction that $\Gamma_{\Psi,\Pi}$ has a cycle $\mathfrak{c}$, whose length must be even, say $2r$ with $r\in\bN$, because $\Gamma_{\Psi,\Pi}$ is bipartite. Then $\mathfrak{c}$ has the shape
\[\begin{tikzpicture}
\node at (0,0) [name = 1] {$i_{1,\pi}$};
\node at (-3,-0.5) [name = 2] {$j_{1,\psi}$};
\node at (0,-1) [name = 3] {$i_{2,\pi}$};
\node at (-3,-1.5) [name = 4] {$j_{2,\psi}$};
\node at (0,-2.5) [name = 5] {$i_{r,\pi}$};
\node at (-3,-3) [name = 6] {$j_{r,\psi}$};
\node at (-1.5,-1.9) {$\vdots$};
\node at (-3,-2.15) {$\vdots$};
\node at (0,-1.65) {$\vdots$};
\draw[-] (1) -- (2) -- (3) -- (4) (5) -- (6) -- (1);
\draw[loosely dotted,-] (4) -- (5);
\node at (-9.5,-0.8) [align =center] {$\mathfrak{c}:\hspace{1mm}i_{1,\pi} -j_{1,\psi} - i_{2,\pi} - j_{2,\psi} - \cdots$\\\hspace{3.5cm}$\cdots- i_{r,\pi} - j_{r,\psi}-i_{1,\pi}$};
\end{tikzpicture}\]
where $i_1,\dots,i_r,j_1,\dots,j_r\in\bN_{n}$. For each $j\in \bN_n$, by Definition \ref{Gamma bipartite}, there are at most two edges in $\Gamma_{\Psi,\Pi}$ with $j_\psi$ as an endpoint, namely 
\[ \begin{cases}
\mathfrak{e}_{j,\mathbf{f}}:\hspace{1mm}j_\psi-(\theta_f\circ\theta_\psi)(j)_\pi & \mbox{defined when $(\theta_f\circ\theta_\psi)(j)\neq0$},\\
\mathfrak{e}_{j,\mathbf{g}}:\hspace{1mm} j_{\psi}-(\theta_g\circ\theta_\psi)(j)_\pi & \mbox{defined when $(\theta_g\circ\theta_\psi)(j)\neq0$}.\end{cases}\]
Putting $i_{r+1} = i_{1}$, we then deduce that 
\[ \{\theta_f(\theta_\psi(j_{s})), \theta_g(\theta_\psi(j_{s}))\} = \{i_{s},i_{s+1}\}\mbox{ for all }1\leq s\leq r.\]
But then by Definition \ref{graph fg def}, this implies that there is an edge joining $i_{s}$ and $i_{s+1}$ in $\Gamma_{\{f,g\}}$, so we have a cycle
\[\begin{tikzpicture}
\node at (0,0) [name = 1] {$i_1$};
\node at (1,1.5) [name = 2] {$i_2$};
\node at (2.5,1.5) [name =3] {$i_3$};
\node at (1,-1.5) [name = 4] {$i_{r}$};
\draw[-] (1) -- (2) -- (3) (4) -- (1);
\draw[loosely dotted,-] (3) -- (3.5,0) (2.5,-1.5) -- (4);
\node at (3,-0.75) {$\cdot$};
\node at (3.1,-0.75+0.15) {$\cdot$};
\node at (2.9,-0.75-0.15) {$\cdot$};
\end{tikzpicture}\]
in $\Gamma_{\{f,g\}}$, which is impossible because $\Gamma_{\{f,g\}}$ is a tree. \end{proof}

For convenience, let us refine Definition \ref{Gamma bipartite} as follows.

\begin{definition}\label{def2}Define $\Gamma_{\Psi,\Pi,\leftrightarrow}$ to be the symmetric directed graph that is obtained from $\Gamma_{\Psi,\Pi}$ by replacing every edge by a pair of arrows going in the opposite directions. In other words, for each $i\in\bN_{n}$, we have the arrows
\[\begin{tikzpicture}
\node at (0,0) [name = 1] {$i_\psi$};
\node at (3,0) [name = 2] {\hspace{1cm}$(\theta_f\circ\theta_\psi)(i)_\pi$};
\path[->] (1) edge [bend right] node [below] {$\mathfrak{a}_{i,\mathbf{f}}$} (2);
\path[->] (2) edge [bend right] node [above] {$\mathfrak{a}_{i,\mathbf{f}}^{-1}$} (1);
\node at (6.5,0) {and};
\node at (8.75,0) [name = 1'] {$i_\psi$};
\node at (11.75,0) [name = 2'] {\hspace{1cm}$(\theta_g\circ\theta_\psi)(i)_\pi$};
\path[->] (1') edge [bend right] node [below] {$\mathfrak{a}_{i,\mathbf{g}}$} (2');
\path[->] (2') edge [bend right] node [above] {$\mathfrak{a}_{i,\mathbf{g}}^{-1}$} (1');
\end{tikzpicture}\]
in $\Gamma_{\Psi,\Pi,\leftrightarrow}$ when $i\not\in\bN_{n,\mathbf{f}}$ and $i\not\in\bN_{n,\mathbf{g}}$, respectively. 
\end{definition}

The next lemma is not necessary, but it makes the proof slightly cleaner.

\begin{lem}\label{singleton lem}There is no vertex in $\Gamma_{\Psi,\Pi}$ which has degree $0$.
\end{lem}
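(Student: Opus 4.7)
The plan is to consider the two types of vertices in $\Gamma_{\Psi,\Pi}$ separately and, in each case, show that a hypothetical degree-$0$ vertex would force a defect in $\Gamma_{\{f,g\}}$ that is incompatible with it being a tree. The only facts I would use are Definition \ref{Gamma bipartite}, the bijectivity of $\theta_\psi$ on $\bN_{0,n}$ with $\theta_\psi(0)=0$ (from Proposition \ref{theta psi def}), and the elementary observation that in a tree on $n+1$ vertices no vertex is isolated and no edge is a loop.

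For a vertex $i_\psi \in \Psi$: saying it has degree $0$ in $\Gamma_{\Psi,\Pi}$ means neither $\mathfrak{e}_{i,\mathbf{f}}$ nor $\mathfrak{e}_{i,\mathbf{g}}$ is drawn, i.e.\ $i \in \bN_{n,\mathbf{f}} \cap \bN_{n,\mathbf{g}}$. By Definition \ref{Gamma bipartite} this is the same as $(\theta_f\circ\theta_\psi)(i) = 0 = (\theta_g\circ\theta_\psi)(i)$. Setting $j = \theta_\psi(i) \in \bN_n$, the edge $\mathfrak{e}_j$ in $\Gamma_{\{f,g\}}$ then joins $\theta_f(j) = 0$ to $\theta_g(j) = 0$, that is, it is a loop at the vertex $0$. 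Since $\Gamma_{\{f,g\}}$ is a tree, this is impossible.

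For a vertex $i_\pi \in \Pi$: its degree in $\Gamma_{\Psi,\Pi}$ counts the $j \in \bN_n$ such that $(\theta_f\circ\theta_\psi)(j) = i$ or $(\theta_g\circ\theta_\psi)(j) = i$. If this count is zero, then as $\theta_\psi$ restricts to a bijection $\bN_n \to \bN_n$, also no $k \in \bN_n$ satisfies $\theta_f(k) = i$ or $\theta_g(k) = i$; by Definition \ref{graph fg def}, the vertex $i$ is then isolated in $\Gamma_{\{f,g\}}$. But a tree on $n+1 \geq 2$ vertices has no isolated vertex, giving the required contradiction.

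I do not anticipate a serious obstacle here: once one has Proposition \ref{theta psi def} and unwinds the definitions, everything reduces to the two structural properties (no loops, no isolated vertices) of the tree $\Gamma_{\{f,g\}}$. The only mild point to keep track of is that $\theta_\psi$, although defined on $\bN_{0,n}$, maps $\bN_n$ bijectively onto $\bN_n$ because $\theta_\psi(0) = 0$, which is what allows the translation between degree conditions in $\Gamma_{\Psi,\Pi}$ and in $\Gamma_{\{f,g\}}$.
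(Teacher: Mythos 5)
Your proof is correct. The first half (vertices in $\Psi$) is exactly the paper's argument: degree zero at $i_\psi$ would force $\theta_f(\theta_\psi(i))=\theta_g(\theta_\psi(i))=0$ with $\theta_\psi(i)\in\bN_n$, i.e.\ a loop at $0$ in $\Gamma_{\{f,g\}}$. For the vertices in $\Pi$ you take a slightly different route: you use that $\theta_\psi$ restricts to a bijection of $\bN_n$ to pull the degree-zero condition back to $\Gamma_{\{f,g\}}$, where it would make the vertex $i$ isolated, contradicting connectedness of that tree. The paper instead rewrites the conditions $\theta_f(\theta_\psi(i))=j$ and $\theta_g(\theta_\psi(i))=j$ via (\ref{cond1}) as $\theta_{f'}(i)=\theta_\pi(j)$ and $\theta_{g'}(i)=\theta_\pi(j)$ and invokes connectedness of $\Gamma_{\{f',g'\}}$ at the vertex $\theta_\pi(j)$. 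The two arguments are equivalent in substance (the bijections $\theta_\psi,\theta_\pi$ make the two trees interchangeable here); yours has the small advantage of referring only to $\Gamma_{\{f,g\}}$ and not needing (\ref{cond1}) or the tree $\Gamma_{\{f',g'\}}$ at all, while the paper's phrasing fits the bookkeeping it has already set up with $\bN_{n,\mathbf{f}}$ and $\bN_{n,\mathbf{g}}$. Your observation that $i\neq 0$ makes the conditions $i\notin\bN_{n,\mathbf{f}}$, $i\notin\bN_{n,\mathbf{g}}$ automatic when translating the degree count is the right point to keep track of, and you did.
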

\begin{proof}For any $i\in \bN_n$, we cannot have $\theta_f(\theta_\psi(i))$ and $\theta_g(\theta_\psi(i))$ both equal to $0$ because $\Gamma_{\{f,g\}}$ has no loop, so plainly vertices $i_\psi$ in $\Psi$ have degree at least one. To show that vertices $j_\pi$ in $\Pi$ have degree at least one, it is enough to show that for any $j\in\bN_n$, one of the equations
\[ \theta_f(\theta_\psi(i)) = j\mbox{ and }\theta_g(\theta_\psi(i)) = j\]
has a solution $i\in\bN_n$. By (\ref{cond1}), they may be respectively rewritten as
\[ \theta_{f'}(i) = \theta_\pi(j) \mbox{ and }\theta_{g'}(i) = \theta_\pi(j).\]
Since $\Gamma_{\{f',g'\}}$ is connected, there must be an edge containing $\theta_\pi(j)$ in $\Gamma_{\{f',g'\}}$. This means that indeed one of these two equations is solvable in $i\in\bN_n$.
\end{proof}

We shall now complete the proof of the backward implication of Theorem \ref{key thm}. Recall that we have already picked bijections $\theta_\pi$ and $\theta_\psi$ for which (\ref{cond1}) is satisfied. It remains to exhibit a solution (\ref{solution}) to the equations (\ref{cond2f}) and (\ref{cond2g}). As illustrated in Example \ref{ex}, we shall do so by ``following the edges" in the graph $\Gamma_{\Psi,\Pi}$. More precisely, we shall carry out the following steps for each connected component $\Gamma$ of $\Gamma_{\Psi,\Pi}$. Notice that $\Gamma$ cannot be a singleton by Lemma \ref{singleton lem} and so it contains a vertex in $\Psi$.
\begin{enumerate}[1.]
\item Fix a vertex $i_{0,\psi}$ in $\Psi$ lying in $\Gamma$ and pick $\varphi_{\psi,i_0}\in\mathrm{Iso}(T^{(\theta_\psi(i_0))},T^{(i_0)})$ to be any isomorphism.
\item Let $v$ be any other vertex in $\Gamma$. Since $\Gamma$ is a tree by Proposition \ref{no cycle prop}, there exists a unique simple path in $\Gamma$ joining $i_{0,\psi}$ and $v$, which corresponds to a simple directed path $\mathfrak{p}$ in $\Gamma_{\Psi,\Pi,\leftrightarrow}$. Let us write $\mathfrak{p} = \mathfrak{a}\cdot \mathfrak{q}$ as a concatenation, where $\mathfrak{q}$ is a simple directed path starting at $i_{0,\psi}$, and $\mathfrak{a}$ is an arrow ending at $v$. Let $d\in\bN$ be the length of $\mathfrak{p}$, and $d-1$ would be the length of $\mathfrak{q}$.\[\begin{tikzpicture}
\node at (0,0) [name = 1] {$i_{0,\psi}$};
\node at (5,0) [name = 2] {$\bullet$};
\node at (7,0) [name = 3] {$v$};
\draw[-] (1) -- (1.5,0);
\path[-,loosely dotted] (1.5,0) edge node [above] {$\mathfrak{q}$} (3.5,0);
\draw[->] (3.5,0) -- (2);
\path[->] (2) edge node [above] {$\mathfrak{a}$} (3);
\draw[-] (1) -- (2,-1);
\draw[->] (5,-1) -- (3);
\path[-,loosely dotted] (2,-1) edge [bend right = 30] node [below] {$\mathfrak{p}$} (5,-1);
\end{tikzpicture}\]
There are two cases to consider.
\begin{enumerate}[(a)]
\item In the case that $d$ is odd, we have
\[v = j_{\pi} \mbox{ with } j = (\theta_h\circ\theta_\psi)(i)\mbox{ and }\mathfrak{a} = \mathfrak{a}_{i,\mathbf{h}}\]
for some $i,j\in\bN_n$ and $(h,\mathbf{h})\in\{(f,\mathbf{f}),(g,\mathbf{g})\}$, as illustrated below.
\[\begin{tikzpicture}
\node at (0,0) [name = 0] {$i_{0,\psi}$};
\node at (0,-2) [name = j] {$i_{\psi}$};
\node at (4,-3) [name = d] {$j_{\pi}$};
\draw[-] (0) -- (2,-0.25);
\draw[->] (2,-0.5-2.5/2) -- (j);
\node at (3.25,-1) {$\mathfrak{q}$};
\path[->] (j) edge node [below] {\hspace{4mm}$\mathfrak{a}_{i,\mathbf{h}}$} (d);
\draw[rotate=90,loosely dotted] (-0.25,-2) parabola bend (-1,-4) (-1.75,-2);
\end{tikzpicture}\]
Assuming that $\varphi_{\psi,i}\in\mathrm{Iso}(T^{(\theta_\psi(i))},T^{(i)})$ has been chosen, we define
\[ \varphi_{\pi,j} = \Phi_{i,\mathbf{h}}(\varphi_{\psi,i}),\mbox{ so then }\varphi_{h',i} = \varphi_{\psi,i}\circ\varphi_{h,\theta_\psi(i)}\circ\varphi_{\pi,(\theta_h\circ\theta_\psi)(i)}\]
is satisfied, as observed in (\ref{observation2}).
\item In the case that $d$ is even, we have
\[v = i_{\psi} \mbox{ with } j = (\theta_h\circ\theta_\psi)(i)\mbox{ and }\mathfrak{a} = \mathfrak{a}_{i,\mathbf{h}}^{-1}\]
for some $i,j\in\bN_n$ and $(h,\mathbf{h})\in\{(f,\mathbf{f}),(g,\mathbf{g})\}$, as illustrated below.
\[\begin{tikzpicture}
\node at (0,0) [name = 0] {$i_{0,\psi}$};
\node at (4,-1.5) [name = j] {$j_{\pi}$};
\node at (0,-3) [name = d] {$i_{\psi}$};
\node at (2,-0.4) {$\mathfrak{q}$};
\draw[loosely dotted,-] (8/5,-0.6) -- (12/5,-0.9);
\draw[-] (0) -- (8/5,-0.6);
\draw[->] (12/5,-0.9) -- (j);
\path[->] (j) edge node [below] {\hspace{4mm}$\mathfrak{a}_{i,\mathbf{h}}^{-1}$} (d);
\end{tikzpicture}\]
Assuming that $\varphi_{\pi,j}\in\mathrm{Iso}(T^{(\theta_\pi(j))},T^{(j)})$ has been chosen, we define
\[ \varphi_{\psi,i} = \Phi^{-1}_{i,\mathbf{h}}(\varphi_{\pi,j}),\mbox{ so then }\varphi_{h',i} = \varphi_{\psi,i}\circ\varphi_{h,\theta_\psi(i)}\circ\varphi_{\pi,(\theta_h\circ\theta_\psi)(i)}\]
is satisfied, as observed in (\ref{observation2}).
\end{enumerate}
In either case, the equation (\ref{cond2f}) or (\ref{cond2g}) represented by $\mathfrak{a}$ is satisfied.
\end{enumerate}
Therefore, by induction on $d$, the above allows us to define $\varphi_{\psi,i}$ and $\varphi_{\pi,j}$ for all vertices $i_\psi\in\Psi\cap\Gamma$ and $j_\pi\in\Pi\cap\Gamma$, such that the equations  (\ref{cond2f}) or (\ref{cond2g}) represented by the edges in $\Gamma$ are all satisfied. Applying this to all connected components of $\Gamma_{\Psi,\Pi}$ then gives us  a solution (\ref{solution}) to the equations (\ref{cond2f}) and (\ref{cond2g}). This completes the proof.
 
\section{Counting unlabeled directed trees with a root} \label{last sec} 
 
Let $\mathcal{T}^\star(n)$ be the set defined as in Remark \ref{rem} and write $t_n^\star$ for its size. We shall end this paper by briefly discussing the problem of computing $t_n^\star$. 

\vspace{1mm}

Clearly $t_0^\star = 1$ and $t_1^\star=2$. It is also easy to see that $t_2^\star = 7$, with
\[\begin{tikzpicture}[inner sep=0pt, outer sep=0pt]
\node at (0,0) [name=r1] {$\bullet$};
\node at (0,-1.5) [name=v11] {$\bullet$};
\node at (0,-3) [name=v21] {$\bullet$};
\node at (1.5,0) [name=r2] {$\bullet$};
\node at (1.5,-1.5) [name=v12] {$\bullet$};
\node at (1.5,-3) [name=v22] {$\bullet$};
\node at (3,0) [name=r3] {$\bullet$};
\node at (3,-1.5) [name=v13] {$\bullet$};
\node at (3,-3) [name=v23] {$\bullet$};
\node at (4.5,0) [name=r4] {$\bullet$};
\node at (4.5,-1.5) [name=v14] {$\bullet$};
\node at (4.5,-3) [name=v24] {$\bullet$};
\node at (6.5,0) [name=r5] {$\bullet$};
\node at (6,-1.5) [name=v15] {$\bullet$};
\node at (7,-1.5) [name=v25] {$\bullet$};
\node at (9,0) [name=r6] {$\bullet$};
\node at (8.5,-1.5) [name=v16] {$\bullet$};
\node at (9.5,-1.5) [name=v26] {$\bullet$};
\node at (11.5,0) [name=r7] {$\bullet$};
\node at (11,-1.5) [name=v17] {$\bullet$};
\node at (12,-1.5) [name=v27] {$\bullet$};
\draw[->] (r1) -- (v11);
\draw[->] (v11) -- (v21);
\draw[->] (v22) -- (v12);
\draw[->] (v12) -- (r2);
\draw[->] (r3) -- (v13);
\draw[->] (v23) -- (v13);
\draw[->] (v14) -- (r4);
\draw[->] (v14) -- (v24);
\draw[->] (r5) -- (v15);
\draw[->] (r5) -- (v25);
\draw[->] (v16) -- (r6);
\draw[->] (v26) -- (r6);
\draw[->] (r7) -- (v17);
\draw[->] (v27) -- (r7);
\end{tikzpicture}\]
as the elements of $\mathcal{T}^\star(2)$, where the root is being placed at the top of graph. For larger values of $n$, we can somewhat use induction to compute $t_n^\star$. 

\begin{definition}Let $\Gamma$ be any unlabeled directed tree with a root and write $\Gamma_{\mathrm{un}}$ for its underlying undirected graph. Let $0$ denote the root, and let $v$ be any other vertex of $\Gamma$.
\begin{enumerate}[$(1)$]
\item Let $\Gamma_v$ be the unlabeled directed tree with root $v$ which is obtained from $\Gamma$ by removing all the vertices $v'$ such that the unique simple path in $\Gamma_{\mathrm{un}}$ joining $0$ and $v'$ does not contain $v$. We call $\Gamma_v$ the \emph{subtree} of $\Gamma$ at $v$.
\item The vertex $v$ is a \emph{child} of $\Gamma$ if the unique simple path in $\Gamma_{\mathrm{un}}$ joining $0$ and $v$ has length one. In this case, this path corresponds to an arrow $\mathfrak{a}$ in $\Gamma$. We call $v$ an \emph{in-child} if $\mathfrak{a}$ ends at $0$, and an \emph{out-child} if $\mathfrak{a}$ starts at $0$.
\end{enumerate}
\end{definition}

\begin{prop}\label{count prop}Suppose that we are given tuples
\[(n_1,n_2),\,\ (k_{1,1}\dots,k_{1,n_1}),\,\ (k_{2,1},\dots,k_{2,n_2})\]
of non-negative integers such that
\[ n = n_1 + n_2,\,\  n_1 = \sum_{s=1}^{n_1} sk_{1,s},\,\ n_2 = \sum_{s=1}^{n_2}sk_{2,s},\]
where an empty sum represents $0$. Then, the number of $\Gamma\in\mathcal{T}^\star(n)$ such that
\begin{enumerate}[$(a)$]
\item For each $1\leq s\leq n_1$, the number of in-children $v$ whose subtrees $\Gamma_v$ have \par\noindent exactly $s$ vertices is equal to $k_{1,s}$;
\item For each $1\leq s\leq n_2$, the number of out-children $v$ whose subtrees $\Gamma_v$ have exactly $s$ vertices is equal to $k_{2,s}$;\end{enumerate}
are both satisfied, is given by
\[ \prod_{s=1}^{n_1}{t_{s-1}^\star + k_{1,s} - 1\choose k_{1,s}} \cdot \prod_{s=1}^{n_2}{t_{s-1}^\star + k_{2,s} - 1\choose k_{2,s}},\]
where an empty product represents $1$.
\end{prop}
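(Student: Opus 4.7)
The plan is to view an element $\Gamma \in \mathcal{T}^\star(n)$ as encoded by the unordered collection of subtrees at the children of the root, keeping in-children and out-children separate. Once this encoding is in place, the count reduces to an elementary application of multiset coefficients.

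First I would establish the encoding precisely. Given $\Gamma \in \mathcal{T}^\star(n)$, removing the root $0$ from the underlying undirected tree $\Gamma_{\mathrm{un}}$ splits it into connected components, each containing exactly one child of $\Gamma$. Each such component, together with the unique arrow joining it to $0$, yields the subtree $\Gamma_v$ rooted at the corresponding child $v$; if $\Gamma_v$ has $s$ vertices then, after forgetting labels, $\Gamma_v$ represents an element of $\mathcal{T}^\star(s-1)$. Conversely, starting from two multisets of rooted subtrees (one for the in-children, one for the out-children), one reconstructs $\Gamma$ uniquely up to root-preserving isomorphism by attaching each subtree to $0$ by an incoming or outgoing arrow according to its designation. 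Two elements of $\mathcal{T}^\star(n)$ coincide (as unlabeled rooted directed trees) if and only if their in-child subtree multiset and their out-child subtree multiset agree, because any root-preserving isomorphism permutes the children and preserves the orientation of the arrow at $0$, hence must match subtrees of the same child-type.

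Next I would assemble the count. Under conditions $(a)$ and $(b)$, the in-child side contributes, for each $1 \leq s \leq n_1$, a multiset of cardinality $k_{1,s}$ drawn from the $t_{s-1}^\star$-element set $\mathcal{T}^\star(s-1)$; by the standard multiset formula the number of such choices is
\[ \binom{t_{s-1}^\star + k_{1,s} - 1}{k_{1,s}}. \]
The analogous statement holds on the out-child side with $k_{2,s}$ in place of $k_{1,s}$. Since the choices at distinct sizes are independent, and in-children are combinatorially distinguished from out-children by the orientation of the arrow at the root, the overall count is the product
\[ \prod_{s=1}^{n_1}\binom{t_{s-1}^\star + k_{1,s} - 1}{k_{1,s}} \cdot \prod_{s=1}^{n_2}\binom{t_{s-1}^\star + k_{2,s} - 1}{k_{2,s}}, \]
with the convention that an empty product is $1$ in the degenerate cases $n_1 = 0$ or $n_2 = 0$. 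The numerical compatibility $1 + \sum_s sk_{1,s} + \sum_s sk_{2,s} = n+1$ ensures that such trees exist precisely when the hypotheses on $(n_1,n_2)$ and the $k_{i,s}$ are met.

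The one point requiring care, which I would identify as the main (mild) obstacle, is the justification that being ``unlabeled'' really forces the counting to proceed by multisets rather than tuples: one must verify that any permutation of two isomorphic subtrees among the in-children of size $s$ (and likewise among out-children) extends to a root-preserving isomorphism of the whole tree, while subtrees attached via arrows of different orientation are never interchangeable. This is immediate once the encoding in the first paragraph is made precise, and the rest of the proof is a bookkeeping exercise.
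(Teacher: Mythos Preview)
Your proposal is correct and follows essentially the same approach as the paper: decompose $\Gamma$ into the multisets of subtrees at its in-children and out-children, and then count each side independently using multiset coefficients. The paper's proof is in fact terser than yours, merely citing the analogous statement for unlabeled rooted trees and noting that the directed case requires separating in-children from out-children.
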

\begin{proof}A similar statement is true for unlabeled rooted trees; see \cite[p. 386]{graph} for example. The only difference is that our trees are directed, which means that the in-children and out-children have to be treated differently.

\vspace{1mm}

To count the number of $\Gamma\in\mathcal{T}^\star(n)$ satisfying (a) and (b), note that $\Gamma$ may be viewed as a collection of subtrees $\Gamma_v$ at the children $v$. Also, the subtrees $\Gamma_v$ at the in-children $v$ are independent from those at the out-children $v$. For each $\ell=1,2$ and $1\leq s\leq n_\ell$, at the $k_{\ell,s}$ vertices $v$ which are 
\[\begin{cases}
\mbox{in-children}&\mbox{if }\ell=1\\
\mbox{out-children}&\mbox{if }\ell=2
\end{cases}\]
and whose subtrees $\Gamma_v$ are to have exactly $s$ vertices, we can choose any $k_{\ell,s}$ elements of $T^\star(s-1)$ to be these $\Gamma_v$ with repetition allowed. There are
\[ \multiset{t_{s-1}^\star}{k_{\ell,s}}  = {t_{s-1}^\star+ k_{\ell,s}-1\choose k_{\ell,s}}\]
such choices. Multiplying over $s$ and $\ell$ then yields the claim.
\end{proof}

Given any $m\in\bN$, let $\mathcal{A}(m)$ be the set of all $m$-tuples $\mathbf{k} = (k_1,\dots,k_m)$ of non-negative integers such that $m = k_1 + 2k_2 + \cdots + mk_m$, and put
\[ P(m;\mathbf{k}) = \prod_{s=1}^{m}{t_{s-1}^\star+k_s-1\choose k_s}\]
in this case. From Proposition \ref{count prop}, we then deduce that
\begin{equation}\label{tn formula}
  t_n^\star = \sum_{m\in\bN_{0,n}}\sum_{\mathbf{k}_1\in \mathcal{A}(m)}\sum_{\mathbf{k}_2\in \mathcal{A}(n-m)}P(m;\mathbf{k}_1)P(n-m;\mathbf{k}_2).\end{equation}
This gives us a way to compute $t_n^\star$ inductively, but the calculation gets very complicated. The better approach is perhaps to consider the associated generating function given by
\[ z\mapsto \sum_{n=0}^{\infty} t_n^\star z^{n},\]
and then use (\ref{tn formula}) to study its properties. But this is a completely different problem. We shall therefore content ourselves with computing $t_n^\star$ for $n\leq 10$ by applying the formula (\ref{tn formula}) directly. 

\vspace{2mm}

\begin{center}\begin{tabular} {|c|c|}
\hline
$n$ & $t_n^\star$\\\hline\hline
$1$ & $1$\\\hline
$2$ & $7$\\\hline
$3$ & $26$\\\hline
$4$ & $107$\\\hline
$5$ & $458$\\\hline
\end{tabular}\quad\quad\quad\quad
\begin{tabular} {|c|c|}
\hline
$n$ & $t_n^\star$\\\hline\hline
$6$ & $2058$\\\hline
$7$ & $9498$\\\hline
$8$ & $44947$\\\hline
$9$ & $216598$\\\hline
$10$ & $1059952$\\\hline
\end{tabular}
\end{center}

\vspace{2mm}

The calculation was done in \textsc{Magma} \cite{magma}. For $1\leq n\leq 7$, the computation only took a few seconds in total. However, the set $\mathcal{A}(n)$ gets very difficult to compute very quickly. For $n\geq 8$, the code gets very slow, and we were only able to run it up to $n=10$. We note that the skew braces (for which the additive and circle groups are isomorphic to the finite same characteristically simple group) enumerated by this calculation are not covered by \cite{Skew braces} and \cite{Skew braces 2} mentioned in the introduction.

\vspace{1mm}

Finally, the code used is a straightforward implementation of (\ref{tn formula}), but we include it below for the interested reader.

{\small \begin{lstlisting}
t:=[1]; // The $(n+1)$st term in t is $t_n^\star$.
A:=[[]]; // The $(n+1)$st term in A is the set $\mathcal{A}(n)$ (except when $n=0$).
P:=[[1]]; // The $(n+1)$st term in P is the set $\mathcal{P}(n)$.
// Suppose we wish to compute $t_1^\star,\dots,t_\ell^\star$.
for n in [1..$\ell$] do 
// Compute $\mathcal{A}(n)$ and add it to the list A.
  Cn:=CartesianPower([0..n],n);
  An:=[Tuplist(k):k in Cn|n eq &+[s*k[s]:s in [1..n]]];
  Append(~A,An);
// Compute $\mathcal{P}(n)$ and add it to the list P.
  Pn:=[&*[Binomial(t[s]+k[s]-1,k[s]):s in [1..n]]:k in An];
  Append(~P,Pn);
// Compute $t_n^\star$ using formula (5.1) and add it to the list t.
  tn:=&+[&+[P1*P2:P1 in P[m+1],P2 in P[n-m+1]]:m in [0..n]];
  Append(~t,tn);
// Output $n$ and $t_n^\star$.
<n,tn>;
end for;
// t; // The new list t now consists of $t_0^\star,\dots,t_\ell^{\star}$.
// A; // The new list A now consists of [],$\mathcal{A}(1),\dots,\mathcal{A}(\ell)$.
// P; // The new list P now consists of $\mathcal{P}(0),\dots,\mathcal{P}(\ell)$.
// Replacing t, A, and P with these new lists, we may run the same
code for n in [$\ell+1$..$\ell'$] to compute $t_{\ell+1}^\star,\dots,t_{\ell'}^\star$ without having to recompute 
the data for $n$ up to $\ell$.
\end{lstlisting}}

\section*{Acknowledgments} 



The author thanks Dr. Chao Qin (Chris King) for helping her calculate $t_n^\star$ for $n\leq 10$  by running the \textsc{Magma} code on his computer. She would also like to thank the referee for helpful comments.

\end{document}